\newtheorem{theorem}{Theorem}
\numberwithin{theorem}{section} 
\newtheorem{lemma}[theorem]{Lemma}
\newtheorem{corollary}[theorem]{Corollary}
\newtheorem{question}[theorem]{Question}
\newtheorem{conjecture}[theorem]{Conjecture}
\theoremstyle{remark}
\newtheorem{remark}[theorem]{Remark}
\theoremstyle{definition}
\newtheorem{definition}[theorem]{Definition}
\newcommand\ab[1]{\lvert #1\rvert}
\title{Oriented Ramsey numbers of graded digraphs}
\author{Patryk Morawski\thanks{Department of Computer Science, ETH Z\"urich, Z\"urich, Switzerland.
			Email: \href{mailto:pmorawski@student.ethz.ch}{\nolinkurl{pmorawski@student.ethz.ch}}. }
   \and
   Yuval Wigderson%
   \thanks{Institute for Theoretical Studies, ETH Z\"urich, Z\"urich, Switzerland. Email: \href{mailto:yuval.wigderson@eth-its.ethz.ch}{\nolinkurl{yuval.wigderson@eth-its.ethz.ch}}.
   Supported by Dr.\ Max R\"{o}ssler, the Walter Haefner Foundation, and the ETH Z\"{u}rich Foundation.}
   }
\date{}
\begin{document}

\maketitle
\begin{abstract}
    We show that any graded digraph $D$ on $n$ vertices with maximum degree $\Delta$ has an oriented Ramsey number of at most $C^\Delta n$ for some absolute constant $C > 1$, improving upon a recent result of Fox, He, and Wigderson. In particular, this implies that oriented grids in any fixed dimension have linear oriented Ramsey numbers, and gives a polynomial bound on the oriented Ramsey number of the hypercube.
    
    We also show that this result is essentially best possible, in that there exist graded digraphs on $n$ vertices with maximum degree $\Delta$ such that their oriented Ramsey number is at least $c^\Delta n$ for some absolute constant $c > 1$.
\end{abstract}
\section{Introduction}\label{section:introduction}

\subsection{Background}
The Ramsey number $r(G)$ of a graph $G$ is the minimum number $N$ such that every 2-coloring of the edges of the complete graph $K_N$ on $N$ vertices contains a monochromatic copy of $G$.
Studying how $r(G)$ depends on $G$ is the main question in graph Ramsey theory, and is one of the most studied topics in combinatorics. In particular, one would like to obtain comparable upper and lower bounds on $r(G)$ in terms of basic parameters of $G$.
Such bounds are known in certain regimes; for example, it is known that if $G$ has $n$ vertices, then $r(G)$ grows exponentially in $n$ if and only if $G$ has $\Omega(n^2)$ edges \cite{conlon2013ramsey,sudakov2011conjecture}.

In particular, sparse graphs---those with $o(n^2)$ edges---have subexponential Ramsey numbers. However, it was observed very early in the history of graph Ramsey theory (see e.g.\ \cite{erdos1975partition}) that certain sparse graphs, such as trees and cycles, have \emph{much} smaller Ramsey numbers, namely \emph{linear} in their order.
In 1975, Burr and Erd\H{o}s \cite{burr_magnitude_1975} conjectured that this is in fact true for all sparse graphs.
Their notion of sparsity is the degeneracy\footnote{In fact, they originally stated their conjecture in terms of the \emph{arboricity} of $G$, but it is easy to verify that a graph has bounded degeneracy if and only if it has bounded arboricity. Most subsequent papers on this topic use degeneracy, which seems more well-suited for Ramsey-theoretic study.} of the graph, where a graph $G$ is called $d$-degenerate if every subgraph of $G$ has a vertex of degree at most $d$, and the degeneracy of $G$ is the least $d$ for which it is $d$-degenerate.
The Burr--Erd\H{o}s conjecture then states that for every $d$-degenerate graph $G$ we have that $r(G) \leq C_d |V(G)|$, where $C_d$ is a constant that depends only on $d$.
The first major step towards proving this conjecture was made by Chv\'{a}tal, R\"odl, Szemer\'{e}di and Trotter \cite{chvatal1983ramsey}, who proved that that for every graph $G$ with maximum degree $\Delta$, we have $r(G) \leq C_\Delta |V(G)|$, that is, that the conjecture holds under the stronger assumption of bounded maximum degree.
After a sequence of further partial results (e.g.\ \cite{fox2009two, kostochka2003ramsey, kostochka2001graphs, chen1993graphs, graham2000graphs}), the full Burr--Erd\H{o}s conjecture was finally resolved by Lee \cite{lee2017ramsey} in 2017.

For bounded-degree graphs, we have a fairly precise understanding of how large their Ramsey numbers are. Substantially improving on the early result of Chvat\'al, R\"odl, Szemer\'edi, and Trotter \cite{chvatal1983ramsey}, Graham, R\"odl, and Ruci\'nski \cite{graham2000graphs} proved that every $n$-vertex graph $G$ with maximum degree $\Delta$ satisfies $r(G) \leq C^{\Delta (\log \Delta)^2}n$, where $C$ is an absolute constant. In a subsequent paper \cite{graham2001bipartite}, they noted that their technique yields a better bound of $r(G) \leq C^{\Delta \log \Delta}$ in case $G$ is bipartite, and also showed that this bound is close to best possible, in that there exist bipartite $n$-vertex graphs maximum degree $\Delta$ satisfying $r(G) \geq c^{\Delta} n$, for another absolute constant $c>1$. Subsequent work by Conlon \cite{conlon2009hypergraph}, Fox--Sudakov \cite{fox2009density}, and Conlon--Fox--Sudakov \cite{conlon2012two} removed one logarithmic factor from both upper bounds; in particular, it is now known that every $n$-vertex bipartite graph with maximum degree $\Delta$ satisfies $r(G) \leq C^{\Delta}n$, which is best possible up to the value of $C$.

We now turn our attention to directed graphs (\emph{digraphs} for short), where similar questions can be asked, and which are the main focus of this paper.
For an acyclic\footnote{A digraph is \emph{acyclic} if it contains no oriented cycles.} digraph $D$, we define the \emph{oriented Ramsey number} $\vv{r}(D)$ as the minimum number $N$ such that every tournament on $N$ vertices, that is, every edge-orientation of the complete graph $K_N$, contains a copy of $D$.
The study of oriented Ramsey numbers was initiated in 1951 by Stearns \cite{stearns1959voting}, who showed that for a transitive tournament $\vv{T_n}$ on $n$ vertices we have $\vv{r}(\vv{T_n}) \leq 2^{n-1}$, which was complemented by a lower bound of $\vv{r}(\vv{T_n}) \geq 2^{\frac{n}{2} - 1}$ by Erd\H{o}s and Moser \cite{erdos1964representation} in 1964.
As in the undirected setting, for sparser digraphs $D$ this number is in general much smaller than exponential in $|V(D)|$; for example, for $n > 8$ every orientation of the $n$-vertex path has oriented Ramsey number equal to $n$ \cite{thomason1986paths, havet2000oriented}.
For more general oriented trees, Sumner conjectured that $\vv{r}(T) \leq 2n - 2$ for any oriented tree $T$ on $n$ vertices.
Sumner's conjecture has attracted a great deal of interest over the years (e.g.\ \cite{thomason1986paths, dross2021unavoidability, el2004trees, haggkvist1991trees, havet2002trees, havet2000median, benford2022few, benford2022many, kuhn2011approximate}) and in 2011 it was proved for sufficiently large $n$ by K\"uhn, Mycroft, and Osthus \cite{kuhn2011proof}.

Motivated by these results, Buci\'c, Letzter and Sudakov \cite{bucic2019directed} asked whether a natural anologue of the Burr--Erd\H{o}s conjecture holds for acyclic digraphs, that is, whether for all acyclic digraphs $D$ with maximum degree\footnote{By the maximum degree of a digraph, we mean the maximum degree of the underlying undirected graph.} $\Delta$ we have that $\vv{r}(D) \leq c_\Delta |V(D)|$ for some constant $c_\Delta$ depending only on $\Delta$.
Quite surprisingly, Fox, He, and Wigderson \cite{fox2021ramsey} recently answered this question in the negative by showing that for any $\Delta$ and large enough $n$ there exists an $n$-vertex digraph $D$ with maximum degree $\Delta$ and $\vv{r}(D) \geq n^{\Omega(\Delta^{2/3} / \log^{5/3} \Delta)}$. 
In the other direction, they proved an upper bound of $\vv{r}(D) \leq n^{C_\Delta \log n}$ for any acyclic digraph $D$ with maximum degree $\Delta$. However, their results leave open the question of whether the worst case behavior for fixed $\Delta$ is always polynomial, or whether it can indeed be super-polynomial.

Why are the Ramsey numbers of acyclic digraphs so much larger than analogous Ramsey numbers in the undirected setting? There are a few reasons, but it seems that an important one has to do with chromatic numbers. In the undirected setting, a graph of bounded maximum degree also has bounded chromatic number, a fact which is often used in upper bounds on $r(G)$; this is also (roughly) why it is possible to prove stronger bounds if one assumes that $G$ is bipartite. However, in the directed setting, the correct analogue of the chromatic number seems to be the \emph{height} of $D$, which is defined as the number of vertices in the longest oriented path in $D$, or equivalently as the minimum number of parts in a partition $V(D)=V_1 \cup \dotsb \cup V_h$ with the property that all edges are directed from $V_i$ to $V_j$ for some $i<j$. As shown by simple examples such as oriented paths, acyclic digraphs can have bounded maximum degree but unbounded height, and this manifests itself in the fact that acyclic digraphs can have much larger Ramsey numbers than their undirected counterparts. In fact, Fox, He, and Wigderson \cite{fox2021ramsey} proved that if $D$ has maximum degree $\Delta$ and height $h$, then $\vv r(D) \leq C_{\Delta,h}n$ for some constant $C_{\Delta,h}$ depending only on $\Delta$ and $h$, showing that an analogue of the Burr--Erd\H os conjecture is true for digraphs of bounded height.
However, it is not the case that $D$ having large height necessarily implies that $\vv r(D)$ is large. Instead, it appears that what really matters is the \emph{multiscale complexity} of $D$---roughly speaking, how many edge length scales $D$ has (we refer to \cite[Section 1]{fox2021ramsey} for more details).
In particular, Fox, He and Wigderson \cite{fox2021ramsey} showed that even if $D$ has unbounded height, its oriented Ramsey number is polynomial in $\ab{V(D)}$ if it satisfies certain other restrictions on the distribution of edges. An important class of digraphs for which they proved such a result is the class of \emph{graded} digraphs, which we now define.
\begin{definition}
    We say that a digraph $D$ is \emph{graded} with a \emph{graded partition} $V(D) = V_1 \cup \dots \cup V_h$ if every edge of $D$ is directed from $V_i$ to $V_{i+1}$ for some $i \in [h-1]$. 
\end{definition}
We remark that graded digraphs are necessarily acyclic, and that the graded partition is unique assuming that the underlying graph of $D$ is connected. In particular, the number $h$ of parts in the graded partition is equal to the height of $D$. 

There are many natural examples of graded digraphs. For example, the $d$-dimensional \emph{grid digraph} $\vv{\Gamma_{d,k}}$ whose vertex set is $[k]^d$ and whose edges are all ordered pairs of the form $$((x_1,\dots,x_i,\dots,x_d), (x_1,\dots,x_i+1,\dots,x_d)) \qquad \text{ for some }i \in [d]$$ is graded; one obtains the graded partition by setting $V_i \coloneqq \{(x_1,\dots,x_d) \in [k]^d : x_1+\dotsb+x_d=i\}$. An important special case of this construction is the \emph{oriented hypercube} $\vv{Q_d}$, which is obtained from the unoriented hypercube graph by directing all edges away towards the positive orthant. More generally, the Hasse diagram of any graded poset is a graded digraph.

Fox, He, and Wigderson \cite{fox2021ramsey} 
proved the following upper bound on oriented Ramsey numbers of graded digraphs.
\begin{theorem}[{\cite[Theorem 1.5]{fox2021ramsey}}]
    If $D$ is a graded digraph with $n$ vertices, height $h$, and maximum degree $\Delta$, then
    \[
    \vv r(D) \leq h^{10\Delta \log_2 \Delta}n.
    \]
    In particular, as $h \leq n$, we have $\vv r(D) \leq n^{11\Delta \log_2 \Delta}$.
\end{theorem}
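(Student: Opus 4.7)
The plan is to combine dependent random choice (DRC) with the graded structure of $D$ to first produce a suitable ``layered target'' configuration inside $T$ and then greedily embed $D$ level by level.

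First, I would preprocess $T$ to obtain an approximately balanced ordered partition $V(T) = B_1 \cup \dots \cup B_h$ with $|B_i| \approx N/h$ and with forward-edge density at least $1/2$ from $B_i$ to $B_{i+1}$ for every $i$. A clean way to do this is to take a median order of $T$ and cut it into $h$ equal consecutive blocks: the standard swap argument for median orders guarantees, for each $i$, that the number of forward edges from $B_i$ to $B_{i+1}$ is at least the number of backward edges, since otherwise swapping the two blocks would increase the forward count and contradict optimality of the median order. (Alternatively, a random equal partition followed by reordering the parts along a Hamilton path in the ``reduced'' tournament whose vertices are the parts achieves the same density bound, up to lower-order terms.)

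Second, I would iteratively refine these blocks using DRC to find subsets $U_i \subseteq B_i$ with $|U_i| \geq 2n$ such that for every $i \in [h-1]$ and every $S \subseteq U_i$ with $|S| \leq \Delta$, the common out-neighborhood of $S$ in $U_{i+1}$ has size at least $n$. Starting from $U_h \coloneqq B_h$ and proceeding downward, the DRC step samples $t$ random vertices $u_1, \dots, u_t$ from $U_{i+1}$ and sets $U_i$ to the common in-neighborhood of the samples inside $B_i$, after deleting one vertex for each ``bad'' $\Delta$-subset. Using density $\geq 1/2$ from $B_i$ to $U_{i+1}$ and convexity, one gets $\mathbb{E}|U_i| \geq |B_i| / 2^t$, and the expected number of bad $\Delta$-subsets in the sampled set is at most $\binom{|B_i|}{\Delta} (n/|U_{i+1}|)^t$, which is small for the right choice of $t$.

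Finally, with $U_1, \dots, U_h$ in hand, I would greedy-embed $D$ level by level: map each $V_i$ into $U_i$ in any order. For each vertex $v \in V_i$ the already-embedded in-neighbors map to a set $S \subseteq U_{i-1}$ with $|S| \leq \Delta$, and by the DRC guarantee the common out-neighborhood of $S$ in $U_i$ has size $\geq n$, which is more than enough to avoid previously-used images.

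The main obstacle is calibrating the DRC parameters to land on the precise bound $h^{10\Delta \log_2 \Delta}$ rather than a weaker one. A naive iteration that loses a factor $2^{\Theta(\Delta \log \Delta)}$ per layer would accumulate to $2^{\Theta(h\Delta \log \Delta)}$, which is far worse than the target. The fix is that the per-layer sample size $t$ must be tuned so that the shrinkage per layer is only polynomial in $h$, i.e., roughly $h^{O(\Delta \log \Delta)/h}$; equivalently, $t$ should scale like $\Theta(\Delta (\log \Delta)(\log h)/h)$ rather than $\Theta(\Delta \log \Delta)$. Verifying that with such a modest $t$ the DRC still kills essentially all bad $\Delta$-subsets---while maintaining $|U_i| \geq 2n$ through all $h$ iterations---is the delicate technical calculation, and is where the exponents $10$ and $\log_2 \Delta$ in the statement come from. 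This step uses in an essential way that $|U_{i+1}|$ is significantly larger than $n$ throughout (providing ``slack'' $(n/|U_{i+1}|)^t$ to suppress the $\binom{|B_i|}{\Delta}$ factor), so the inductive invariant must track not just $|U_i| \geq n$ but a stronger size bound $|U_i| \geq n \cdot h^{c_i}$ that decays appropriately as $i$ decreases.
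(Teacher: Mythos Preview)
This theorem is not proved in the paper; it is quoted from \cite{fox2021ramsey} as prior work that the paper's main result (\cref{theorem:easy_upper_bound}) supersedes. So there is no ``paper's own proof'' to compare your attempt against.

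That said, your outline has two genuine gaps.

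First, the density claim does not survive the iteration. The block-swap argument only shows $e(B_i,B_{i+1})\ge \tfrac12|B_i||B_{i+1}|$; it says nothing about $e(B_i,U_{i+1})$ once $U_{i+1}$ is a proper subset of $B_{i+1}$, which is exactly the quantity your DRC step needs. The per-vertex median-order property would help here, but with equal-size consecutive blocks it only gives $d^-(v,B_i)\ge(|B_i|-|B_{i+1}|)/2=0$ for $v\in B_{i+1}$. You would need unequal blocks, or the window-plus-pigeonhole device the paper uses in \cref{lemma:dependent_random_choice}, to get a usable density lower bound into an arbitrary subset $U_{i+1}\subseteq B_{i+1}$.

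Second, and more seriously, the ``delete one vertex per bad $\Delta$-subset'' form of DRC cannot produce a bound that is linear in $n$ for fixed $\Delta,h$---which the target $h^{10\Delta\log_2\Delta}n$ is. Already for $h=2$, the expected number of surviving bad $\Delta$-subsets is of order $(N/2)^\Delta(2n/N)^t$, and balancing this against $\mathbb E|U_1|\approx(N/2)/2^t$ forces $N=n^{1+\Omega(1)}$ for every choice of $t$; one never reaches $N=2^{O(\Delta\log\Delta)}n$. Your diagnosis of the obstacle is also off: in your own scheme the $2^t$ shrinkage does \emph{not} compound across layers, since each $U_i$ is carved out of a fresh block $B_i$ of size $N/h$. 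The proposed fix of taking $t=\Theta(\Delta(\log\Delta)(\log h)/h)$ therefore addresses a non-problem, and in fact makes things worse---with $t\ll\Delta$ the factor $(n/|U_{i+1}|)^t$ cannot possibly beat $\binom{|B_i|}{\Delta}$. The route that does work, and that the paper takes to prove the stronger \cref{theorem:easy_upper_bound}, is to \emph{allow} a small fraction of bad $\Delta$-subsets in the DRC conclusion (\cref{lemma:dependent_random_choice}) and then embed via the Lov\'asz local lemma (\cref{lemma:embedding}) rather than greedily; the greedy-with-deletion scheme you propose is intrinsically too lossy for a linear-in-$n$ bound.
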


\subsection{Main results}
In this paper, we show a much stronger bound on the oriented Ramsey number of graded digraphs. 
In fact, we show that for any fixed $\Delta$, any graded digraph with maximum degree $\Delta$ has an oriented Ramsey number that is linear in its order.
\begin{theorem}\label{theorem:easy_upper_bound}
    If $D$ is a graded digraph on $n$ vertices with maximum degree $\Delta$, then
    \[
        \vv r(D) \leq 10^9 \Delta^3 2^{4\Delta}n.
    \]
    More precisely, if $D$ has maximum in-degree $\Delta^-$ and maximum out-degree $\Delta^+$, then
    $$\vv{r}(D) \leq 10^9 \Delta^+ (\Delta^-)^2 2^{4\Delta^-} n.$$
\end{theorem}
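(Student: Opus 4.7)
My plan is to combine dependent random choice (DRC) preprocessing on the tournament with a greedy level-by-level embedding. Since $D$ is graded, we can process its vertices in topological order $V_1, \dots, V_h$: when embedding $v \in V_i$, all in-neighbors of $v$ lie in $V_{i-1}$ and are already placed, so we need only find a fresh vertex of $T$ that is a common out-neighbor of the images of these in-neighbors. I will further exploit the fact that $V(D) = V_{\mathrm{even}} \sqcup V_{\mathrm{odd}}$ (partitioned by parity of level index) with all edges crossing the partition, allowing me to reduce to finding a suitable bipartite structure in $T$.

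Concretely, the core task is to establish a DRC lemma producing disjoint sets $A, B \subseteq V(T)$ of size at least $2\Delta^+ n$ each, such that every $\Delta^-$-subset $S$ of one side has common out-neighborhood of size at least $2\Delta^+ n$ in the other side. Given this, the embedding is straightforward: place $V_{\mathrm{even}} \to A$ and $V_{\mathrm{odd}} \to B$ greedily in topological order, with even/odd roles chosen so that each edge of $D$ has the correct direction in $T$; at each step, the DRC condition guarantees at least $2\Delta^+ n > n - 1$ candidate images, so a fresh vertex always exists. To prove the DRC lemma, I would start with a uniformly random balanced partition $V(T) = A_0 \sqcup B_0$, which ensures that both directed bipartite graphs $A_0 \to B_0$ and $B_0 \to A_0$ have density $\approx \tfrac{1}{2}$, and then apply DRC in both directions in turn, each round picking $\Theta(\Delta^-)$ random pivots on one side and restricting the other side to their common in-neighborhood. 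Each DRC round costs a factor of $2^{O(\Delta^-)}$ in size; running two rounds yields the overall $2^{4\Delta^-}$ factor, while the polynomial factors $\Delta^+$ and $(\Delta^-)^2$ absorb the slack required by the greedy embedding (to accommodate up to $\Delta^+$ yet-to-be-placed out-neighbors per vertex) and the union-bound losses over bad small subsets.

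The main obstacle will be proving the DRC lemma with the correct linear-in-$n$ dependence. The naive one-shot DRC, applied with $O(\Delta^-)$ pivots to guarantee that every $\Delta^-$-subset has a common out-neighborhood of absolute size at least $\Theta(\Delta^+ n)$, yields only a polynomial bound $N \gtrsim n^{1 + 1/\Delta^-}$ from the union bound over bad subsets, not the linear bound $N \gtrsim 2^{O(\Delta^-)} n$ we need. Overcoming this will require either a more careful two-round DRC where the second round eliminates essentially all bad subsets left over from the first, or a different style of argument that exploits the fact that only specific $\Delta^-$-subsets arising as images of in-neighborhoods in $D$ actually need to satisfy the common out-neighborhood condition (rather than all $\Delta^-$-subsets of $A$ or $B$)---potentially via a random ordering of the embedding combined with concentration bounds. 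Another subtlety is ensuring that the second DRC round (refining $B_0 \to B_1$) does not destroy the property established in the first round (that small subsets of $A_1$ have many out-neighbors in $B_0$); to control this, one needs to choose parameters so that the common out-neighborhoods in $B_0$ are comfortably larger than the factor by which $B_0$ shrinks when passing to $B_1$.
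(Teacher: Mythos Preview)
Your two-set DRC lemma, as stated, is false, and this is a fatal obstruction to the whole plan. Take $T$ to be the transitive tournament on $N$ vertices. For \emph{any} disjoint $A,B \subseteq V(T)$, whichever of the two sets contains the vertex of largest index has a vertex with zero out-neighbors in the other set; in particular you cannot have every (or even most) $\Delta^-$-subsets of $A$ with many out-neighbors in $B$ \emph{and} vice versa. So the bidirectional structure you want simply does not exist in general tournaments. The collapse of the graded digraph to its even/odd bipartition forgets exactly the information that matters: edges of $D$ go from $V_{\text{odd}}$ to $V_{\text{even}}$ \emph{and} from $V_{\text{even}}$ to $V_{\text{odd}}$, and a tournament need not contain any pair of sets supporting both directions richly.

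The paper's proof confronts this head-on by never asking for back-and-forth edges. It fixes a \emph{median order} of $T$ and builds $h$ sets $A_1,\dots,A_h$, each sitting in its own short interval of that order, arranged left to right; the median-order property guarantees that the bipartite graph from any interval to a later one has density bounded below by roughly $\tfrac12$, so a one-directional DRC step produces each $A_i$ with most of its $\Delta_i^-$-subsets having many out-neighbors in $A_{i+1}$. A geometric trick (the interval hosting $A_i$ has the same length regardless of $i$, found inside a constant-factor-larger window) prevents the losses from compounding over $h$ rounds, which is what gives the linear-in-$n$ bound. Finally, because DRC only gives ``most'' subsets rather than ``all'', a greedy embedding does not work; the paper embeds each layer randomly and uses the Lov\'asz local lemma to avoid the bad subsets while simultaneously keeping enough room for the next layer. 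Both the median order and the local lemma are essential ideas absent from your proposal.
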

We stress that the height of $D$ does not affect the bound in Theorem \ref{theorem:easy_upper_bound} at all. This is somewhat surprising, given the intuition above that the height of a digraph should play an analogous role to the chromatic number of a graph, and should in turn affect the oriented Ramsey number. Nonetheless, an understanding that arises from our techniques is that graded digraphs ``behave like'' bipartite graphs, regardless of their height. 

As an immediate corollary of Theorem \ref{theorem:easy_upper_bound}, we obtain a linear upper bound on the oriented Ramsey numbers of grid digraphs in any fixed dimension, since $\vv{\Gamma_{d,k}}$ has maximum in- and out-degree equal to $d$.
\begin{corollary}
    For any $d \geq 1$, there exists a constant $C_d = 10^9 d^3 2^{4d}$ such that the $d$-dimensional grid digraph $\vv{\Gamma_{d,k}}$  satisfies $\vv r(\vv{\Gamma_{d,k}}) \leq C_d \ab{V(\vv{\Gamma_{d,k}})}$.
\end{corollary}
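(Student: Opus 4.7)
The plan is to derive this corollary directly from the more precise version of \Cref{theorem:easy_upper_bound}, which bounds $\vv r(D)$ in terms of the in- and out-degrees separately. There is essentially no obstacle here: we just need to verify that $\vv{\Gamma_{d,k}}$ is graded and compute its maximum in- and out-degrees.

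First I would recall, as noted in the text preceding the definition of the grid digraph, that $\vv{\Gamma_{d,k}}$ is a graded digraph whose graded partition is given by $V_i = \{(x_1,\dots,x_d) \in [k]^d : x_1 + \dots + x_d = i\}$ for $d \leq i \leq dk$. Every edge of $\vv{\Gamma_{d,k}}$ goes from some vertex $(x_1,\dots,x_d)$ in $V_i$ to a vertex of the form $(x_1,\dots,x_j+1,\dots,x_d)$, which lies in $V_{i+1}$, so the grading condition is satisfied.

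Next I would compute the degrees. For any vertex $v = (x_1,\dots,x_d) \in [k]^d$, the out-neighbours of $v$ are the vectors obtained by increasing one coordinate $x_j$ by $1$, provided $x_j < k$; there are at most $d$ such choices of $j$. Similarly, the in-neighbours of $v$ are obtained by decreasing one coordinate by $1$, so there are also at most $d$ of them. Hence $\Delta^+(\vv{\Gamma_{d,k}}), \Delta^-(\vv{\Gamma_{d,k}}) \leq d$.

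Finally, I would apply the second, more precise bound of \Cref{theorem:easy_upper_bound} with $\Delta^+ = \Delta^- = d$ and $n = |V(\vv{\Gamma_{d,k}})| = k^d$, yielding
\[
    \vv r(\vv{\Gamma_{d,k}}) \leq 10^9 \cdot d \cdot d^2 \cdot 2^{4d} \cdot |V(\vv{\Gamma_{d,k}})| = 10^9 d^3 2^{4d} \, |V(\vv{\Gamma_{d,k}})|,
\]
which is precisely the claimed bound with $C_d = 10^9 d^3 2^{4d}$.
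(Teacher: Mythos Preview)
Your proof is correct and matches the paper's approach exactly: the paper simply states that the corollary is immediate from \Cref{theorem:easy_upper_bound} since $\vv{\Gamma_{d,k}}$ has maximum in- and out-degree equal to $d$, and you have spelled out precisely this computation.
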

We remark that there has recently been a great deal of interest in Ramsey- and Tur\'an-type questions involving grid graphs, see e.g.\  \cite{conlon2023size, clemens2021size, gao2023extremal,bradac2023turan,gishboliner2022constructing,furedi2013uniform,kim2016two,mota2015ramsey}. In particular, it is proved in \cite[Corollary 1.4]{mota2015ramsey} that the \emph{undirected} Ramsey number of the two-dimensional $k\times k$ grid graph is $(\frac 32+o(1))k^2$.

At the other extreme, where $k$ is fixed and $d$ tends to infinity, we obtain a polynomial bound. For example, for the oriented hypercube $\vv{Q_d}$, Theorem \ref{theorem:easy_upper_bound} implies that $\vv r(\vv{Q_d})\leq 2^{5d+o(d)} = \ab{V(\vv{Q_d})}^{5+o(1)}$. By optimizing our techniques, we are able to improve the exponent from $5$ to $\log_2(17) \approx 4.09$.
\begin{theorem}\label{thm:hypercube}
    There exists an absolute constant $C>0$ such that $\vv{r}(\vv{Q_d}) \leq C d^3 17^{d}$.
\end{theorem}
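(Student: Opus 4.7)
The plan is to revisit the proof of Theorem \ref{theorem:easy_upper_bound}, extract a strengthening that is sensitive to the level-by-level structure of the graded partition, and then apply this refinement to $\vv{Q_d}$ using the binomial theorem to replace the naive $2^{5d}$ by $17^d$.

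Recall that $\vv{Q_d}$ is graded with partition $V(\vv{Q_d}) = V_0 \cup V_1 \cup \dots \cup V_d$, where $|V_k| = \binom{d}{k}$, and every vertex in $V_k$ has exactly $k$ in-neighbors (all in $V_{k-1}$) and $d-k$ out-neighbors (all in $V_{k+1}$). Applying Theorem \ref{theorem:easy_upper_bound} as a black box with $\Delta^+ = \Delta^- = d$ and $n = 2^d$ yields only $\vv{r}(\vv{Q_d}) \leq 10^9 d^3 \cdot 2^{4d} \cdot 2^d = O(d^3 \cdot 32^d)$. The loss of $(32/17)^d$ comes entirely from the fact that the vertices of the hypercube are very far from being uniformly of in-degree $d$: a vertex in level $V_k$ has in-degree only $k$.

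To exploit this, I would re-derive (a strengthening of) Theorem \ref{theorem:easy_upper_bound} in which the factor $2^{4\Delta^-} \cdot n$ is replaced by a sum
\[
\sum_{i} 2^{4\Delta_i^-} \, |V_i|,
\]
where $\Delta_i^-$ denotes the maximum in-degree of any vertex in level $V_i$. This is the natural refinement for any embedding argument that proceeds level-by-level, since the ``cost'' of embedding a vertex $v$ typically depends on the in-degree of $v$ and not on the global maximum. For $\vv{Q_d}$ we then have $\Delta_k^- = k$, so the binomial theorem yields
\[
\sum_{k=0}^{d} \binom{d}{k} \cdot 2^{4k} \;=\; (1+2^4)^d \;=\; 17^d,
\]
and together with the surviving polynomial factor of order $d^3$ this gives the desired bound $\vv{r}(\vv{Q_d}) \leq C d^3 \cdot 17^d$.

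The main obstacle is ensuring that every step in the proof of Theorem \ref{theorem:easy_upper_bound} admits this level-by-level refinement. In particular, any union bound, dependent random choice step, or pre-processing partition that contributed a factor of $2^{O(\Delta^-)}$ must be reorganized so that it is paid locally, at level $i$, with $2^{O(\Delta_i^-)}$, rather than globally with $2^{O(\Delta^-)}$; one must also check that no auxiliary step (reservoirs, buffer sets, or iterative embedding) silently reintroduces the global maximum. Once the level-wise bound is in place, the identity $(1+2^4)^d = 17^d$ makes the hypercube case immediate.
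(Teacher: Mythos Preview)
Your plan is exactly the paper's approach: the authors prove a level-by-level strengthening of Theorem~\ref{theorem:easy_upper_bound} (stated as Theorem~\ref{theorem:upper_bound}) in which $2^{4\Delta^-}n$ is replaced by $\sum_i 2^{2(\Delta_{i-1}^- + \Delta_i^-)}|V_i|$, and then derive Theorem~\ref{thm:hypercube} via the binomial identity $\sum_i \binom{d}{i}\,4\cdot 16^i = 4\cdot 17^d$ just as you propose. The only discrepancy is that the exponent at level $i$ in the actual refinement depends on the in-degrees into \emph{both} $V_i$ and $V_{i+1}$ (reflecting the need to be consistent with the past \emph{and} prepared for the next step in the embedding), rather than your $4\Delta_i^-$ alone; for the hypercube this costs only a constant factor, so your computation still goes through.
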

In fact, motivated by the example of the hypercube, in  \cref{section:upper_bound} we prove a strengthening of \cref{theorem:easy_upper_bound}, which gives a better bound for graded digraphs where the large in-degrees are only in the parts of the graded partition where the number of vertices is small. Such a result is useful for $\vv{Q_d}$, since in the hypercube, almost all vertices (and in particular those vertices lying in the very large parts of the graded partition) have in- and out-degree close to $d/2$.

In the undirected setting, it is a major open problem to determine $r(Q_d)$.
A famous conjecture of Burr and Erd\H{o}s \cite{burr_magnitude_1975} from 1975 is that the Ramsey number of the hypercube is linear in its order, i.e.\ that $r(Q_d) = O(2^d)$. This question has been intensively studied (see e.g.\ \cite{beck1983upper, graham2001bipartite, shi2001cube, shi2007tail, fox2009density, conlon2016short}); the current best known bound is due to Tikhomirov \cite{tikhomirov2024remark}, who proved that $r(Q_d) \leq 2^{(2 - \varepsilon)d}$, where $\varepsilon > 0$ is some small absolute constant.
However, the results in the undirected setting cannot be used directly to obtain upper bounds on $\vv r(\vv{Q_d})$, and to the best of our knowledge \cref{thm:hypercube} is the first known polynomial bound on $\vv r(\vv{Q_d})$. Instead, the techniques from the unidrected setting naturally yield polynomial upper bounds on the oriented Ramsey number of the \emph{bipartite orientation} of the hypercube, where all edges are directed from vertices of even to odd Hamming weight, rather than the more natural orientation $\vv{Q_d}$.

Our second main result shows that \cref{theorem:easy_upper_bound} is close to best possible, in the sense that there exist graded digraphs with maximum degree $\Delta$ and oriented Ramsey number of at least $c^\Delta |V(D)|$ for some absolute constant $c>1$. A very similar result was proved in the undirected setting by Graham, R\"odl and Ruci\'nski \cite{graham2001bipartite}; in fact, it is not hard to adapt their construction to show the existence of such a $D$ which is \emph{bipartite}, that is, of height $2$. However, by modifying their construction appropriately, we are able to prove such a result for $D$ of arbitrary height, where the lower bound again does not depend on the height.

\begin{theorem}\label{theorem:lower_bound}
    There exist constants $c > 1$ and $\Delta_0$ such that for all $\Delta \geq \Delta_0$, $n \geq \Delta$, and $h \geq 2$ there exists a graded digraph $D$ with maximum degree $\Delta$ and with a graded partition $V(D) = V_1 \cup \dots \cup V_h$ such that $|V_i| \leq n$ for all $i \in [h]$, such that $\vv{r}(D) > c^\Delta h n \geq c^\Delta \ab{V(D)}$.
\end{theorem}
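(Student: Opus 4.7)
The plan is to adapt the probabilistic construction of Graham--R\"odl--Ruci\'nski \cite{graham2001bipartite}, who proved an analogous result for bipartite graphs in the undirected setting, by modifying it to handle the graded structure with $h$ layers. Define $D$ to be a random graded digraph with layers $V_1, \ldots, V_h$ each of size exactly $n$, where for each $i \in [h-1]$ and each pair $(u,v) \in V_i \times V_{i+1}$, the edge $(u,v)$ is included independently with probability $p = \alpha \Delta / n$ for a small absolute constant $\alpha > 0$. Standard Chernoff bounds applied to the in- and out-degrees of each vertex, together with a union bound over the $hn$ vertices, show that with high probability every vertex of $D$ has total degree at most $\Delta$ (otherwise we decrease $\alpha$ slightly, which only affects the final constant $c$).

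To show $\vv{r}(D) > N \coloneqq c^{\Delta}hn$, I will union-bound over all $2^{\binom{N}{2}}$ tournaments $T$ on $N$ vertices. For a fixed $T$, the first moment method gives
\[\Pr_D[D \hookrightarrow T] \le \sum_{\phi} (1-p)^{M_T(\phi)}, \qquad M_T(\phi) = \sum_{i=1}^{h-1} \bigl(n^2 - e^+_T(A_i, A_{i+1})\bigr),\]
where $\phi$ ranges over injective maps $V(D) \to V(T)$, $A_i \coloneqq \phi(V_i)$, and $e^+_T(A, B)$ denotes the number of directed edges of $T$ from $A$ to $B$. The goal is to bound this sum by enough to beat the $2^{-\binom{N}{2}}$ factor from union-bounding over all $T$.

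For the base case $h = 2$, this is precisely the oriented analogue of Graham--R\"odl--Ruci\'nski's bipartite argument: a tournament on $N$ vertices corresponds to a 2-coloring of $K_N$ in which the forward edges play the role of one color class, and their bound on $(n!)^2 \sum_{(A,B)} (1-p)^{n^2 - e_T^+(A,B)}$ transfers without modification. For general $h$, the key observation is that the ordered edge sets underlying $e^+_T(A_i, A_{i+1})$ for distinct $i$ are disjoint (because $A_1, \ldots, A_h$ are mutually disjoint subsets of $V(T)$). Thus the sum over tuples $(A_1, \ldots, A_h)$ factors naturally by layers, and my plan is to fix the tuple layer by layer: given $(A_1, \ldots, A_i)$, apply the bipartite GRR-type bound to sum over admissible $A_{i+1}$. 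Iterating over all $h-1$ layer pairs yields a product-type bound whose logarithm is proportional to $(h-1)n$, producing the desired $c^\Delta h n$ dependence.

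The main obstacle will be executing this layered iteration so that the GRR bound propagates multiplicatively across all $h-1$ consecutive pairs without the constants degrading with $h$. In particular, the bipartite estimate typically combines a "main" term with a small "bad" contribution from tuples with atypical forward density; in composing across $h-1$ layers one must carefully bound both contributions so as not to accumulate spurious $h$-dependent factors. Once the composite bound is established, plugging in $N = c^{\Delta}hn$ for $c > 1$ a sufficiently small absolute constant (independent of $h$ and $n$) ensures that summing $\Pr_D[D \hookrightarrow T]$ over all tournaments $T$ remains strictly less than $1$, which together with the max-degree bound yields the existence of the required digraph $D$.
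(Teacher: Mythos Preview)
Your plan has a basic quantifier error that makes it unworkable as stated. To prove $\vv r(D) > N$ you only need to exhibit \emph{one} tournament on $N$ vertices that avoids $D$; you do not need $D$ to avoid every such tournament. Union-bounding over all $2^{\binom{N}{2}}$ tournaments and asking for $\sum_T \Pr_D[D \hookrightarrow T] < 1$ would instead attempt to produce a $D$ embedding into no $N$-vertex tournament---which is impossible, since the transitive tournament on $N \geq hn$ vertices contains every acyclic digraph on at most $N$ vertices, so already $\Pr_D[D \hookrightarrow T_{\mathrm{trans}}] = 1$. This is also not what Graham--R\"odl--Ruci\'nski do: their host is a \emph{specific} structured coloring (a blow-up of a small random object), not a union bound over all colorings. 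Even if you repair the quantifiers by taking $T$ uniformly at random and bounding the expected number of embeddings, the first moment does not close: you get roughly $N^{hn}(1-p/2)^{(h-1)n^2}$ with $p=\alpha\Delta/n$, and the condition for this to be $<1$ becomes $\log N \lesssim \alpha\Delta/2$, which fails as soon as $\log n$ is large compared to $\Delta$ (and the theorem must cover all $n \geq \Delta$).

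The paper's proof is quite different in structure. It first establishes the $h=2$ case via a GRR-style argument: the host $R$ is a blow-up of a small random tournament, the guest $D_0$ is a pseudorandom bipartite digraph, and one shows not only that $R$ avoids $D_0$ but that $R$ avoids $D_0[A'\cup B']$ for any $A',B'$ covering $98\%$ of each side. For general $h$, the guest $D$ is built by stacking $h-1$ copies of $D_0$, and the host $T$ is obtained by blowing up a transitive tournament on $H=\lceil h/2\rceil-1$ vertices, replacing each vertex by a copy of $R$. A short combinatorial argument then tracks, for a putative embedding $\phi$, the index $j_i$ of the first block into which almost all of $V_i$ is mapped, and uses the two properties of $(D_0,R)$ to force $j_{i+2}>j_i$; this exhausts the $H$ available blocks before all $h$ layers are placed, yielding a contradiction. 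The point is that the host is highly structured and the $h$-dependence comes from this level-tracking, not from any product-form probabilistic estimate across layers.
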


While our proof of \cref{theorem:lower_bound} is fairly closely inspired by the techniques of Graham--R\"odl--Ruci\'nski \cite{graham2001bipartite}, there are a number of techniques that go into the proof of \cref{theorem:easy_upper_bound}. The three main ingredients are the dependent random choice technique, the median order of a tournament, and an embedding argument based on the Lov\'asz local lemma. Dependent random choice is a very powerful technique which has been used in many of the recent breakthroughs in the Ramsey theory of sparse graphs; we refer to the survey \cite{fox2011dependent} for more information. The median order is an elementary, yet surprisingly powerful, tool in the study of tournaments, and has been instrumental in many of the recent advances on Sumner's conjecture and related questions; see e.g.\ \cite{havet2000median} for more details. The idea of combining these two techniques\footnote{Strictly speaking, \cite{draganic2021powers} does not use dependent random choice, but rather a simpler K\H ov\'ari--S\'os--Tur\'an-type argument. Nonetheless, the high-level approach is closely related.} was already used in \cite{draganic2021powers}, where an upper bound on the oriented Ramsey number of the power of an oriented path is proved; at a high level, this digraph is embedded in an arbitrary tournament by repeatedly embedding small pieces, and each small piece is embedded by combining dependent random choice with properties of the median order. We apply a similar technique, but as we are embedding an arbitrary graded digraph, more care is needed in the embedding step. To achieve this, we embed each part of $D$ randomly, and ensure that the embedding is successful thanks to the Lov\'asz local lemma; this is inspired by a similar argument in \cite{conlon2016short}, upper-bounding the Ramsey numbers of bounded-degree bipartite graphs. However, as our digraphs may have arbitrary height, we need to ensure at every embedding step both that we are consistent with the past, and that we are well-equipped to apply the local lemma at the next step, which adds some further complications. We defer a more detailed proof sketch to \cref{sec:UB outline}.

The remainder of the paper is organized as follows.
We prove (a strengthening of) \cref{theorem:easy_upper_bound} in \cref{section:upper_bound}, which begins with a detailed proof outline. 
Similarly, in  \cref{section:lower_bound}, we first give a sketch of the proof of \cref{theorem:lower_bound}, then prove it for height-$2$ digraphs, and finally prove it in full generality. We end in \cref{sec:conclusion} with some concluding remarks and open problems.

\paragraph{Notation:} For a directed graph $D$, we use $V(D)$ to denote its vertex set and $E(D)$ to denote its edge set, which is a collection of ordered pairs of elements of $V(D)$.
For a vertex $v \in V(D)$ we write $N_D^+(v)$ and $N_D^-(v)$ for its out- and in-neighborhood and $d_D^+(v)$ and $d_D^-(v)$ for its out- and in-degree, respectively.
For a subset $U \subseteq V(D)$ we let $N_D^+(U)$ denote the common out-neighborhood of $U$ and similarly $N_D^-(U)$ the common in-neighborhood of $U$.
Additionally, for a vertex $v \in V(D)$ we let $d_D^+(v, U) = |N_D^+(v) \cap U|$ and $d_D^-(v, U) = |N_D^-(v) \cap U|$ be the out- and in-degree of $v$ into $U$.
Whenever the digraph $D$ is clear from context, we omit the subscript and write $d^+(v)$ for $d_D^+(v)$, etc.
Throughout the paper, we omit ceilings and floors whenever they are not crucial.

\section{Proof of Theorem \ref{theorem:easy_upper_bound}}\label{section:upper_bound}

As stated above, instead of proving Theorem \ref{theorem:easy_upper_bound}, we will prove a stronger theorem that leverages the fact that the graded digraph could locally have different maximum in-degrees in different parts.
This will allow us to use less overhead to embed the parts where the in-degree is small, and is particularly useful for graded digraphs in which the large in-degree only appears in parts of the graded partition whose sizes are small, as is the case for the oriented hypercube $\vv{Q}_d$.
More formally, we prove the following theorem, which immediately implies \cref{theorem:easy_upper_bound} by upper-bounding each $\Delta_i^-$ by $\Delta^-$.
\begin{theorem}\label{theorem:upper_bound}
    Let $D$ be a graded digraph on $n$ vertices with a graded partition $V(D) = V_1 \cup \dots \cup V_h$ for some $h \in \mathbb{N}$ and maximum in- and out-degree $\Delta^-$ and $\Delta^+$ respectively. 
    Moreover, for each $i\in [h-1]$ let $\Delta^-_i$ be the maximum in-degree in the induced subgraph $D[V_i \cup V_{i+1}]$ and set $\Delta^-_0 = \Delta^-_{h} = 0$.
    Then $$\vv{r}(D) \leq 10^9 (\Delta^-)^2 \Delta^+ \sum_{i=1}^h 2^{2(\Delta_{i-1}^- + \Delta_{i}^-) } |V_i|.$$
\end{theorem}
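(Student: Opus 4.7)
The plan is to process the parts $V_1, \dotsc, V_h$ of $D$ sequentially, embedding each $V_i$ into a carefully chosen subset $W_i$ of a consecutive interval $I_i$ in a median order of the host tournament $T$. The proof combines three ingredients: the median order of $T$, dependent random choice (DRC), and the Lov\'asz local lemma (LLL), following the high-level strategy outlined in the introduction.

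\textbf{Setup.} Let $T$ be a tournament on $N$ vertices where $N$ matches the claimed upper bound. Fix a median order $v_1 < v_2 < \dotsb < v_N$ of $T$ and partition its vertex set into consecutive intervals $I_1, \dotsc, I_h$ with $|I_i|$ proportional to $(\Delta^-)^2 \Delta^+ \cdot 2^{2(\Delta^-_{i-1} + \Delta^-_i)} |V_i|$, so that $\sum_i |I_i| \le N$. We will construct an embedding $\phi \colon V(D) \to V(T)$ with $\phi(V_i) \subseteq I_i$. The key property of the median order that we use repeatedly is that for any vertex $v_k$ and any $\ell > k$, at least $(\ell - k)/2$ of the vertices $v_{k+1}, \dotsc, v_\ell$ are out-neighbors of $v_k$; this holds because any sub-interval of a median order is itself a median order of the induced sub-tournament.

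\textbf{Step 1: DRC to find $W_i$.} Within each interval $I_i$, we apply dependent random choice to produce a subset $W_i \subseteq I_i$ of size $|W_i| \ge |I_i|/2^{O(\Delta^-_{i-1} + \Delta^-_i)}$ satisfying two properties. First, an \emph{incoming} property: for every subset $U \subseteq W_{i-1}$ of size at most $\Delta^-_{i-1}$, the common out-neighborhood satisfies $|N^+_T(U) \cap W_i| \ge |W_i|/2^{O(\Delta^-_{i-1})}$. Second, an \emph{outgoing} property: the density of out-edges from $W_i$ into $I_{i+1}$ is large enough to allow the next DRC step to succeed. Enforcing both properties accounts for the doubled exponent $2(\Delta^-_{i-1} + \Delta^-_i)$. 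The density that DRC needs as input is supplied by the median order property applied to the interval $I_{i-1} \cup I_i$.

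\textbf{Step 2: Random embedding with LLL.} Having fixed $W_1, \dotsc, W_h$, we embed the parts in order. Given $\phi|_{V_{i-1}}$, for each $u \in V_i$ define
$$
C_u \coloneqq W_i \cap \bigcap_{w \in N^-_D(u) \cap V_{i-1}} N^+_T(\phi(w)).
$$
By the incoming DRC property together with $|N^-_D(u) \cap V_{i-1}| \le \Delta^-_{i-1}$, we have $|C_u| \ge |W_i|/2^{O(\Delta^-_{i-1})}$. We pick $\phi(u)$ uniformly at random from $C_u$, independently over $u \in V_i$. All directed edges between $V_{i-1}$ and $V_i$ are preserved by construction, so the only bad events are injectivity failures $B_{u,u'} = \{\phi(u) = \phi(u')\}$ for distinct $u, u' \in V_i$. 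Each has probability at most $1/|C_u|$, and two such events are dependent only through shared random choices tied to common in-neighbors in $V_{i-1}$, a quantity controllable in terms of $\Delta^+$ and $\Delta^-$. The symmetric LLL then guarantees that with positive probability no collision occurs, provided $|C_u|$ exceeds a constant multiple of $(\Delta^-)^2 \Delta^+$; this is ensured by our choice of $|I_i|$. We condition on such an embedding of $V_i$ and iterate.

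\textbf{Main obstacle.} The most delicate step is Step 1: we must arrange $W_i$ to be \emph{simultaneously} a good target for images from $W_{i-1}$ and a good source of density into $W_{i+1}$. Naively chaining DRC applications is risky because the first shrinkage may destroy the density needed for the second. Overcoming this requires a careful analysis that uses the median order structure to ensure enough density remains after each DRC pass, yielding the $2^{2(\Delta^-_{i-1} + \Delta^-_i)}$ factor. The polynomial $(\Delta^-)^2 \Delta^+$ prefactor then falls out of the LLL analysis of the collision events. Managing the interactions among intervals of varying sizes while keeping the total size at $N$ is the main bookkeeping challenge.
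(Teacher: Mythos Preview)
Your high-level plan matches the paper's, but there is a genuine gap in Step~1 that propagates into Step~2. You claim that dependent random choice yields a set $W_i$ such that \emph{every} subset $U \subseteq W_{i-1}$ of size at most $\Delta^-_{i-1}$ has a large common out-neighbourhood in $W_i$. Standard DRC does not give this: sampling $\ell$ random vertices from a target set $B$ and taking their common in-neighbourhood produces a \emph{source} set $A$ in which \emph{most} (not all) $\Delta^-$-subsets have many common out-neighbours in $B$. Forcing the number of bad subsets to zero would require either $\ell$ so large that $|A|$ becomes useless, or a target count $s$ so small that the embedding cannot proceed. Moreover, DRC naturally constructs the source side given the target side, so to get a statement about subsets of $W_{i-1}$ having many out-neighbours in $W_i$, one must already know $W_i$ when building $W_{i-1}$; your forward construction has this reversed. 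The paper resolves both issues by building the sets \emph{backwards}: it fixes $A_h$ at the end of the median order and iteratively applies DRC to obtain $A_{h-1}, A_{h-2}, \dotsc, A_1$, each time only guaranteeing that all but a $\delta_i$-fraction of $\Delta^-_i$-subsets of $A_i$ have at least $s_{i+1}$ common out-neighbours in $A_{i+1}$.

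Because only ``most'' subsets are good, your LLL step is also incomplete. When embedding $V_i$, injectivity is not the only concern: one must additionally guarantee, for every $u \in V_{i+1}$, that the image $\phi(N^-_D(u)) \subseteq A_i$ is a \emph{good} subset (one with many common out-neighbours in $A_{i+1}$), so that the candidate set $C_u$ at the next stage is large. The paper's local-lemma application therefore includes, besides the collision events $A_{vw}$, a second family of bad events $B_u$ for $u \in V_{i+1}$, and the dependency graph and the weights $x,y$ are chosen to handle both families simultaneously. Your dependency analysis is also off: since the $\phi(u)$ for $u \in V_i$ are independent, two collision events $B_{u,u'}$ and $B_{u'',u'''}$ are dependent exactly when $\{u,u'\}\cap\{u'',u'''\}\neq\varnothing$, so each has roughly $2|V_i|$ neighbours in the dependency graph, not a quantity bounded by $\Delta^+$ and $\Delta^-$ alone. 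Finally, the interval sizes cannot simply be taken proportional to $2^{2(\Delta^-_{i-1}+\Delta^-_i)}|V_i|$: the backward DRC step requires the new interval to have length at least half that of the previous one, which is why the paper uses the geometrically weighted quantities $n_i = \sum_{j\ge i} 2^{i-j}(2+\varepsilon)^{2(\Delta^-_{j-1}+\Delta^-_j)}|V_j|$.
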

Before proceeding with the proof of \cref{theorem:upper_bound}, we note that it immediately yields \cref{thm:hypercube}, the upper bound on the oriented Ramsey number of the hypercube.
\begin{proof}[Proof of  \cref{thm:hypercube}]
    Let $d \in \mathbb{N}$ and $\vv{Q_d}$ be the $d$-dimensional hypercube on the vertex set $V = \{ 0,1 \}^d$.
    Moreover, for each $0\leq i \leq d$, let $V_i := \{ v\in V : \sum_j v_j = i \}$.
    Then $V = V_0 \cup \dots \cup V_d$ is a graded partition of $\vv{Q_d}$ and for each $i = 0, \dots, d-1$ the maximum in-degree in the induced subgraph $\vv{Q_d}[V_i \cup V_{i+1}]$ is $i+1$.
    Therefore, by Theorem \ref{theorem:upper_bound},
    \[
        \vv{r}(\vv{Q_d}) \leq C' d^3 \sum_{i=0}^d \binom{d}{i} 4 \cdot 2^{4i} = C d^3 (16 + 1)^d = C d^3 17^d,
    \]
    for some absolute constants $C'$ and $C$.
\end{proof}

\subsection{Proof outline}\label{sec:UB outline}
We now sketch our proof of \cref{theorem:upper_bound}, before proceeding with the technical details.
As our goal is upper-bounding $\vv r(D)$, we wish to prove that any sufficiently large tournament contains a copy of $D$, so we fix such a tournament $T$.
We will first find disjoint subsets $A_1, \dots, A_h$ such that almost all $(\Delta_{i-1}^-)$-subsets of $A_{i-1}$ have many out-neighbors in $A_{i}$.
To achieve this, we will use the notion of a median order and then apply the dependent random choice technique.
A \emph{median order} is an ordering $v_1, \dots, v_N$ of vertices of $T$ that maximizes the number of forward edges, that is, edges $v_iv_j$ with $i < j$.
Given a median order $v_1, \dots, v_N$ we write $[i, j)$ for $\{v_i , \dots, v_{j-1} \}$ and similarly $[i, j]$ for $\{v_i, \dots, v_j\}$.
We note that in a median order for any $v_i$ and $j < i$ at least half of the vertices $[j, i)$ are in-neighbors of $v_i$, since otherwise we could move $v_i$ to the front of the interval to obtain an ordering with more forward edges.

To find our sets $A_1, \dots, A_h$, we will place $A_h$ at the end of a median order of $T$ and then inductively find $A_{h-1}, A_{h-2}$, and so on, all the way down to $A_1$.
At each step, we will make sure that the last $A_i$ found is contained within some small interval $I_i$ in the median order.
To find $A_{i-1}$ we will first find an interval $I_{i-1}$ so that almost half of the edges go from $I_{i-1}$ to $A_i$.
Using properties of the median order, we can always find $I_{i-1}$ within a bigger interval $I'_{i-1}$, which immediately precedes $I_i$ and is only larger than $I_i$ by a fixed multiplicative factor. However, we will also ensure that $\ab{I_{i-1}}=\ab{I_i}$, so that
this multiplicative factor does not stack up as we proceed with the next layers. Given such $I_{i-1}$ and $A_i$, a standard application of the dependent random choice technique allows us to construct $A_{i-1}\subseteq I_i$ with the property that most $(\Delta_{i-1}^-)$-subsets of $A_{i-1}$ have many common out-neighbors in $A_{i}$.

Having found all the sets $A_1, \dots, A_h$, we now want to embed each $V_i$ into the corresponding $A_i$.
We will start this time with $V_1$ and after having embedded the first $i-1$ layers we will use the Lov\'asz Local Lemma to embed the $i$th layer.
The added difficulty in this step is that we wish to maintain the ability to keep this process going;
therefore, when embedding the $i$th layer, we will also make sure that each vertex of the $(i+1)$st layer we will still have many possible embeddings in $A_{i+1}$.
This allows us to then reuse the local lemma argument in the next layer as well.

We now proceed with the details of the proof, which is split across the next three subsections.
The basic dependent random choice lemma, which we will iterate to construct the sets $A_i$, is stated and proved in \cref{section:dependent_random_choice}.
The basic embedding lemma, which allows us to embed $V_i$ into $A_i$ while being in the position to continue the process in step $i+1$, is given in \cref{section:finding_embedding}. 
We combine these two tools in \cref{section:proof_upper_bound}, where we prove \cref{theorem:upper_bound} via two inductive arguments: first to find the sets $A_1, \dots, A_h$, and then the second to embed our digraph $D$ into them.
While this is essentially just a combination of the two main lemmas, some care is needed to ensure that
the dependencies between the various parameters work out properly.

\subsection{Dependent random choice}\label{section:dependent_random_choice}
In this section, we prove a lemma that will allow us to find the subsets $A_1, \dots, A_h$ as described above. While the statement may seem daunting because of all of the parameters, the basic idea is the simple one described above: given a set $B$ contained in an interval of the median ordering, we are able to find a set $A$, contained in a nearby interval of the same size, such that $A$ is large and such that most $(\Delta^-)$-subsets of $A$ have many common out-neighbors in $B$.
\begin{lemma}\label{lemma:dependent_random_choice}
    Let $a, a', b, \ell, s, N, k, \Delta^- \in \mathbb{N}$ be integers with $2a' \geq a$, and let $T$ be a tournament on $N$ vertices with a median order $v_1, \dots, v_N$.
    Moreover, let $2ka' < j \leq N - a + 1$ and let $B \subseteq [j, j+a)$ be an arbitrary subset of size at least $b$.
    Then there exist $j - 2ka' \leq j' \leq j-a'$ and $A \subseteq [j', j' + a')$ such that
    \begin{itemize}
        \item $|A| \geq \frac{a'}{2} (\frac{k-1}{2k})^\ell,$ {and}
        \item for all but at most $4(\frac{2k}{k-1})^\ell\binom{a'}{\Delta^-}(\frac{s}{b})^\ell$ subsets $S \subseteq A$ with $|S| = \Delta^-$, we have $|N^+(S) \cap B| \geq s+1.$
    \end{itemize}
\end{lemma}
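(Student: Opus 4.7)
The plan is to locate a subinterval $I' = [j',j'+a')$ of $[j-2ka', j)$ that sends many edges into $B$, and then apply dependent random choice within $I'$ to extract $A$. To find $I'$, partition $[j-2ka', j)$ into $2k$ disjoint length-$a'$ subintervals $J_1, \ldots, J_{2k}$; the starting index of each lies in $[j-2ka', j-a']$ as required. The median order property gives: for any $v$ at position $i$ and any $p \leq i$, $|N^-(v) \cap [p, i-1]| \geq (i-p)/2$, since otherwise moving $v$ to position $p$ would strictly increase the number of forward edges, contradicting maximality. For $v \in B$ at position $i \in [j, j+a-1]$, applying this with $p = j-2ka'$ and subtracting the overhang $|N^-(v) \cap [j, i-1]| \leq a-1$ yields
\[
|N^-(v) \cap [j-2ka', j)| \geq ka' - (a-1)/2 \geq (k-1)a',
\]
where the last inequality uses the hypothesis $2a' \geq a$. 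Summing over $v \in B$ and averaging over the $2k$ subintervals, some $J_m$ satisfies $e(J_m, B) \geq |B| \cdot a'(k-1)/(2k)$; set $I' \coloneqq J_m$.

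Now sample $T_1, \ldots, T_\ell \in B$ independently and uniformly, and let $A \coloneqq \{u \in I' : T_i \in N^+(u) \text{ for all } i\}$. By Jensen's inequality applied to $x \mapsto x^\ell$,
\[
\mathbb{E}[|A|] = \sum_{u \in I'} \left(\frac{|N^+(u) \cap B|}{|B|}\right)^\ell \geq a' \left(\frac{e(I', B)}{a' |B|}\right)^\ell \geq a' \left(\frac{k-1}{2k}\right)^\ell,
\]
while the expected number of ``bad'' size-$\Delta^-$ subsets $S \subseteq A$ with $|N^+(S) \cap B| \leq s$ is at most $\binom{a'}{\Delta^-}(s/b)^\ell$. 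A Markov-style lower-tail bound (splitting $\mathbb{E}[|A|]$ using $|A| \leq a'$) gives $\Pr[|A| \geq \mathbb{E}[|A|]/2] \geq \tfrac12((k-1)/(2k))^\ell$, and Markov's inequality on the bad count gives failure probability at most $\tfrac14((k-1)/(2k))^\ell$ at the stated threshold $4(2k/(k-1))^\ell \binom{a'}{\Delta^-}(s/b)^\ell$. A union bound then produces a realization satisfying both conclusions of the lemma.

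The main subtle point is the application of the median order in the first step: we want a bound on $|N^-(v) \cap [p, q)|$ where the right endpoint $q = j$ may lie strictly to the left of $v$'s position in the order, so the standard property does not apply directly to the interval $[p, q)$. The fix is to apply it on the full prefix $[p, i-1]$ ending at $v$ and then subtract the overhang $[j, i-1]$ of length at most $a-1 \leq 2a' - 1$; this is where the hypothesis $2a' \geq a$ is essential, as it ensures the overhang is small enough for us to retain density $(k-1)/(2k)$ after averaging. The remainder is a standard DRC/Markov computation.
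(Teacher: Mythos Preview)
Your proof is correct and follows essentially the same approach as the paper: locate a dense subinterval via the median-order property plus pigeonhole, then run dependent random choice. The only cosmetic difference is in how a good outcome is extracted from the expectation bounds---you use a reverse-Markov lower tail on $|A|$ combined with Markov on the bad count and a union bound, whereas the paper uses the single linearity trick $\mathbb{E}\bigl[X - \tfrac{\mathbb{E}[X]}{2\mathbb{E}[Y]}Y - \tfrac{\mathbb{E}[X]}{2}\bigr]=0$; both yield the same constants.
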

\begin{proof}
    Note that we may assume $\ab B = b$, because the conclusion for any larger value of $\ab B$ is a strictly weaker statement. 
    Let $J = [j-2ka', j)$ and notice that since $v_1, \dots, v_N$ is a median order of the vertices of $T$ at least half of the vertices from $[j-2ka', i)$ are in-neighbors of $v_i$ for each $v_i$. Therefore, for every $v_i \in [j,j+a)$, we have
    \[
        d^-(v_i,J) = d^-(v_i, [j-2ka',i)) - d^-(v_i, [j,i)) \geq \frac{2ka'+i-j}{2} - (i-j)=\frac{2ka'-(i-j)}{2} \geq (k-1)a',
    \]
    where the final step uses that $i-j \leq a \leq 2a'$.
    Therefore, since $B \subseteq [j, j+a)$, we have
    \[
        \sum_{u \in J} d^+(u, B) = \sum_{v\in B} d^-(v, J) \geq b(k-1)a'.
    \]
     Now, for $i \in [2k]$ let $I_i = [j+ (i - 2k - 1)a', j+(i-2k)a')$ and notice that $J = \bigcup_{i \in [2k]} I_i$.
     Therefore, by the pigeonhole principle there exists an $i$ such that
     \[
        \sum_{u \in I_i} d^+(u, B) \geq b\left(\frac{k-1}{2k}\right)a'.
     \]
    Fix such an $i$, and let $I = I_i$. Let $j'=j+(i-2k-1)a'$ be the left endpoint of $I$.
    
    Pick now a set $L$ of $\ell$ vertices from $B$ uniformly at random with repetitions, let $M = \{ u \in I : L \subseteq N^+(u) \cap B\}$, and write $X = |M|$.
    Jensen's inequality then implies that
    \[
        \mathbb{E}[X] = \sum_{u \in I} \left(\frac{d^+(u,B)}{b}\right)^\ell \geq 
        a' \cdot \left(\frac{k-1}{2k}\right)^\ell.
    \]
    Let $Y$ be the random variable counting the number of subsets of $M$ of size $\Delta^-$ with at most $s$ common out-neighbors in $B$.
    For a given such $S \subseteq I$, the probability that it is a subset of $M$ is at most $(\frac{\ab{N^+(S)\cap B}}{b})^\ell \leq (\frac{s}{b})^\ell$ and therefore
    \[
        \mathbb{E}[Y] \leq \binom{a'}{\Delta^-}\left(\frac{s}{b}\right)^\ell.
    \]
    Let us suppose for the moment that $\mathbb{E}[Y] > 0$. By linearity of expectation we know that
    \[
        \mathbb{E}\left[X - \frac{\mathbb{E}[X]}{2\mathbb{E}[Y]}Y - \frac{\mathbb{E}[X]}{2}\right] = 0.
    \]
    Therefore, there exists a choice of $L$ and the corresponding $M$ for which this expression is non-negative. Fix such a choice.
    Then
    \[
        \ab M=X \geq \frac{\mathbb{E}[X]}{2} \geq \frac{a'}{2} \cdot \left(\frac{k-1}{2k}\right)^\ell,
    \]
    and, since $X \leq |I| = a'$
    \[
        Y \leq \frac{2X}{\mathbb{E}[X]}\mathbb{E}[Y] \leq 4 \left(\frac{2k}{k-1}\right)^\ell \binom{a'}{\Delta^-} \left(\frac{s}{b}\right)^\ell.
    \]
    In particular, we may set $A=M$ and conclude the proof.

    It remains to consider the case that $\mathbb{E}[Y] = 0$. In this case, as $Y$ is a non-negative random variable, it must take on the value $0$ with probability $1$. Thus, if we select any $L$ for which the corresponding $X$ is at least $\mathbb E[X]$, we can again set $A=M$ and conclude the proof.
\end{proof}

\subsection{Finding a good embedding}\label{section:finding_embedding}
In this section we prove a lemma that will allow us to embed each part $V_i$ of the graded partition into the respective $A_i$.
For this, we will use the Lov\'{a}sz local lemma \cite{lovasz_lemma}, whose statement we now recall.
\begin{lemma}\label{lemma:lovasz_local_lemma}
    Let $A_1, \dots, A_n$ be events in an arbitrary probability space and let $H = ([n], E)$ be a graph such that for each $i\in[n]$ the event $A_i$ is mutually independent of the events $\{ A_j : (i, j) \notin E\}$.
    Suppose moreover that $0 \leq x_1, \dots, x_n < 1$ are real numbers such that for all $i \in [n]$ we have $\Pr[A_i] \leq x_i \prod_{(i, j) \in E} (1 - x_j)$.
    Then
    \[
        \Pr\left[\bigwedge_{i=1}^{n} \overline{A_i}\right] \geq \prod_{i=1}^n (1 - x_i) > 0.
    \]
\end{lemma}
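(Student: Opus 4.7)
The plan is to establish a stronger inductive claim and then chain it into the final bound. Specifically, I will prove that for every $i \in [n]$ and every subset $S \subseteq [n] \setminus \{i\}$,
\[
    \Pr\!\left[A_i \;\middle|\; \bigcap_{j \in S} \overline{A_j}\right] \leq x_i,
\]
by induction on $\ab S$, and then combine these estimates via the chain rule to conclude $\Pr[\bigwedge_i \overline{A_i}] \geq \prod_i (1-x_i)$. The base case $\ab S = 0$ is immediate, since $\Pr[A_i] \leq x_i \prod_{(i,j)\in E}(1-x_j) \leq x_i$, as each factor $(1-x_j)$ lies in $(0,1]$.

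For the inductive step, I partition $S = S_1 \cup S_2$ with $S_1 = \{j \in S : (i,j) \in E\}$ and $S_2 = S \setminus S_1$, so that $A_i$ is mutually independent of $\{A_j : j \in S_2\}$. Writing
\[
    \Pr\!\left[A_i \;\middle|\; \bigcap_{j \in S} \overline{A_j}\right] = \frac{\Pr\!\left[A_i \cap \bigcap_{j \in S_1} \overline{A_j} \;\middle|\; \bigcap_{j \in S_2} \overline{A_j}\right]}{\Pr\!\left[\bigcap_{j \in S_1} \overline{A_j} \;\middle|\; \bigcap_{j \in S_2} \overline{A_j}\right]},
\]
the numerator is at most $\Pr[A_i \mid \bigcap_{j \in S_2}\overline{A_j}] = \Pr[A_i] \leq x_i \prod_{(i,j)\in E}(1-x_j)$, where the equality uses mutual independence. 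For the denominator, I peel the events $\overline{A_j}$ for $j \in S_1$ off one at a time; at each step, the inductive hypothesis (applied to a conditioning set strictly smaller than $S$) produces a factor of at least $1 - x_j$. The product $\prod_{j \in S_1}(1-x_j)$ obtained in the denominator then cancels the corresponding terms in the numerator's bound, leaving exactly $x_i$, as desired.

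To finish, I apply the chain rule:
\[
    \Pr\!\left[\bigwedge_{i=1}^n \overline{A_i}\right] = \prod_{i=1}^n \Pr\!\left[\overline{A_i} \;\middle|\; \bigwedge_{j<i} \overline{A_j}\right] \geq \prod_{i=1}^n (1-x_i),
\]
which is strictly positive since each $x_i<1$. The main obstacle is the peeling argument in the denominator: one must verify that each intermediate conditioning set is strictly smaller than $S$ (so that the inductive hypothesis applies) and that none of the events conditioned upon is null, so that all the conditional probabilities are well-defined. Both are handled by the strict inequality $x_i<1$ being propagated through the induction, after which the remainder of the argument is routine algebraic manipulation.
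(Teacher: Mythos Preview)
The paper does not actually prove this lemma; it is simply stated as the classical Lov\'asz local lemma with a citation to the original Erd\H{o}s--Lov\'asz paper, and then applied as a black box in the proof of Lemma~2.4. So there is no ``paper's own proof'' to compare against.

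That said, your proposal is the standard textbook proof and is correct. The induction on $\ab{S}$, the split $S = S_1 \cup S_2$ into neighbours and non-neighbours of $i$, the numerator bound via mutual independence, and the peeling of the denominator using the inductive hypothesis on strictly smaller conditioning sets---all of this is exactly the well-known argument. Your remark about well-definedness of the conditional probabilities is also on point: the same induction, combined with the chain rule, shows that $\Pr[\bigcap_{j \in S'} \overline{A_j}] > 0$ for every $S' \subseteq [n]$, so no division by zero occurs.
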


We are now ready to state and prove our embedding lemma. At a high level, it says the following. We wish to embed a bipartite digraph $D=(V_1 \cup V_2,E)$ into a tournament $T$. Eventually, the bipartite digraph will be the induced subdigraph on two consecutive layers of the graded partition. As such, we have target sets $A,B \subseteq V(T)$, and would like to embed the $V_1$ into $A$ and $V_2$ into $B$. Eventually, these sets will come from our list $A_1,\dots,A_h$, so we may assume that most $(\Delta^-)$-subsets of $A$ have many common neighbors in $B$, which is of course convenient for successfully embedding $D$. However, because we wish to be consistent with the previous steps of the embedding, we cannot embed $V_1$ arbitrarily into $A$; rather, we are given a list of options $f(v) \subseteq A$ for each vertex $v \in V_1$, and must embed $v$ into one of the options from $f(v)$. Moreover, we want to ensure that, having embedded $V_1$, there are actually many ways to embed each vertex of $V_2$ into $B$; this is necessary to ensure that at the next stage of the embedding, the lists $f(v)$ remain sufficiently large to apply the same lemma again. The next lemma simply states that, as long as the relevant parameters are related in the appropriate ways, such an embedding is possible. It is proved by embedding each $v$ into a uniformly random choice in $f(v)$, and using the local lemma to show that we succeed with positive probability.

\begin{lemma}\label{lemma:embedding}
    Let $D = (V_1 \cup V_2, E)$ be a bipartite digraph with maximum in- and out-degree $\Delta^-\geq 1$ and $\Delta^+$, respectively, with all edges oriented from $V_1$ to $V_2$.
    Let $a, b, c \in \mathbb{N}$ be integers and $\delta>0$ be a parameter such that $a \geq b \geq 32\ab{V_1}$ and $\delta \leq (2^{-1/2}\frac{b}{a})^{\Delta^-}/({4\Delta^+ \Delta^-})$.
    Let $T$ be a tournament and $A, B \subseteq V(T)$ be subsets of its vertices such that $|A| = a$ and for all but at most a $\delta$-fraction of subsets $S \subseteq A$ of size $\Delta^-$ we have $|N_T^+(S) \cap B| \geq c$.

    Let $f: V_1 \to 2^{A}$ be a function such that for each $v \in V_1$ we have $|f(v)| \geq b$. Then there exists an injective function $\phi: V_1 \to A$ such that for each $v \in V_1$ we have $\phi(v) \in f(v)$ and for each $u \in V_2$ we have $|N_T^+(\phi(N_D^-(u)))\cap B| \geq c$.
\end{lemma}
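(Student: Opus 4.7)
The plan is to define $\phi$ via independent random sampling and apply the Lov\'asz Local Lemma. Sample each $\phi(v)$, for $v \in V_1$, uniformly and independently from $f(v)$. I will define two families of bad events whose simultaneous avoidance yields the desired embedding. For each unordered pair $\{v,w\} \subseteq V_1$, let $B_{\{v,w\}}$ denote the \emph{collision} event $\phi(v) = \phi(w)$; avoiding every such event is equivalent to $\phi$ being injective. For each $u \in V_2$ with $d_D^-(u) \geq 1$, let $A_u$ denote the event that $\phi$ is injective on $N_D^-(u)$ and $|N_T^+(\phi(N_D^-(u))) \cap B| < c$; assuming global injectivity, avoiding every $A_u$ gives the required out-neighborhood property at each $u$ (the case $d_D^-(u)=0$ is automatic, as the hypothesis on $A$ forces $|B| \geq c$).

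The probability bounds are as follows. Trivially $\Pr[B_{\{v,w\}}] = |f(v) \cap f(w)|/(|f(v)||f(w)|) \leq 1/b$. For $\Pr[A_u]$, let $d = d_D^-(u)$ and first strengthen the hypothesis: for every $d \leq \Delta^-$, at most a $\delta$-fraction of the size-$d$ subsets $S' \subseteq A$ satisfy $|N_T^+(S')\cap B| < c$. This follows by double-counting pairs $(S', S)$ with $S' \subseteq S$, $|S'| = d$, $|S| = \Delta^-$, both ``bad'', using that $|N_T^+(\cdot)\cap B|$ is monotone under inclusion together with the identity $\binom{a}{\Delta^-}\binom{\Delta^-}{d} = \binom{a}{d}\binom{a-d}{\Delta^- -d}$. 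Now, conditional on $\phi$ being injective on $N_D^-(u)$, any specific size-$d$ image is realized by at most $d!$ bijections of probability $\leq 1/b^d$ each, giving
\[
    \Pr[A_u] \leq \delta \binom{a}{d} \cdot \frac{d!}{b^d} \leq \delta \left(\frac{a}{b}\right)^d \leq \delta \left(\frac{a}{b}\right)^{\Delta^-} \leq \frac{2^{-\Delta^-/2}}{4\Delta^+\Delta^-},
\]
where the penultimate inequality uses $a \geq b$ and the last uses the hypothesis on $\delta$.

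For the LLL dependency graph, each bad event is determined by finitely many $\phi(v)$'s, so $A_u$ is adjacent to at most $\Delta^-|V_1|$ events $B_{\{\cdot,\cdot\}}$ and at most $\Delta^-\Delta^+$ other $A$-events, while $B_{\{v,w\}}$ is adjacent to at most $2|V_1|$ other $B$'s and at most $2\Delta^+$ events $A_u$. I would then apply the asymmetric Lov\'asz Local Lemma with uniform weights $y = \alpha/|V_1|$ on the $B$-events and $z = \beta/(\Delta^-\Delta^+)$ on the $A$-events, for small absolute constants $\alpha, \beta > 0$ to be tuned. The main obstacle is numerical: the hypothesis $b \geq 32|V_1|$ is essentially tight, so the $B$-inequality $1/b \leq y(1-y)^{2|V_1|}(1-z)^{2\Delta^+}$ forces $\alpha e^{-2(\alpha+\beta)} \gtrsim 1/32$, while the $A$-inequality requires $\alpha < \tfrac{1}{2}\ln 2$ so that the loss $(1-y)^{\Delta^-|V_1|} \approx e^{-\alpha \Delta^-}$ does not decay faster than the $2^{-\Delta^-/2} = e^{-\Delta^- \ln 2/2}$ bound on $\Pr[A_u]$. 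The constants $32$ and $2^{-1/2}$ in the hypothesis are chosen precisely so that both constraints can simultaneously be met; for instance, $\alpha \approx 0.2$ and $\beta \approx 0.3$ fit into both, and the LLL then produces a $\phi$ avoiding every bad event, which is the desired injective embedding.
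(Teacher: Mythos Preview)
Your proposal follows exactly the paper's approach: independent uniform sampling from the lists $f(v)$, the same two families of bad events (collisions and small common out-neighbourhoods), the same probability bounds via the double-counting identity, and the asymmetric Lov\'asz Local Lemma with the same dependency counts. The only caveat is numerical: since $(1-t)^n \le e^{-nt}$ (not $\ge$), your heuristic check with $\alpha\approx 0.2$, $\beta\approx 0.3$ is not a valid lower bound, and the paper instead takes weights $x=8/b$ and $y=1/(2\Delta^+\Delta^-)$ and uses $1-z\ge 4^{-z}$ for $z\in[0,\tfrac12]$ to make the same verification go through rigorously.
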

\begin{proof}
    Let $s = |V_1|$ and let $\phi: V_1 \to A$ be a random mapping such that for each $v \in V_1$ the value of $\phi(v)$ is picked uniformly at random from the set $f(v)$, with all these choices made independently.
    For all distinct $v, w  \in V_1$, let $A_{vw}$ be the event that $\phi(v) = \phi(w)$.
    Moreover, for each $u \in V_2$ let us write $N_u = N_D^-(u)$ and let $B_u$ be the event that $|\phi(N_u)| = N_u$ but $|N^+_T(\phi(N_u)) \cap B| < c$.
    Clearly, if none of the bad events $A_{vw}$ and $B_u$ hold, then $\phi$ satisfies the requirements of the lemma.

    Let us now bound the probabilities for each of these events to hold.
    For $v, w \in V_1$ we have $\Pr[A_{vw}] \leq \frac{1}{b}$.
    To bound $\Pr[B_u]$, let us suppose that $|\phi(N_u)| = |N_u|$ but $|N^+_T(\phi(N_u)) \cap B| < c$. If we denote $\ell=\ab{N_u}$, this implies that
    $|N_T^+(S) \cap B| < c$ for all $\binom{a-\ell}{\Delta^- - \ell}$ $(\Delta^-)$-subsets $S$ of $A$ containing $\phi(N_u)$.
    Moreover, the number of pairs consisting of a bad $(\Delta^-)$-subset $S \subseteq A$ and an $\ell$-subset of $S$ is at most $\delta \binom{a}{\Delta^-}\binom{\Delta^-}{\ell}$.
    Therefore, the number of tuples $(\phi(v))_{v \in N_u} \in A^\ell$ for which the event $B_u$ holds is at most
    \[
        \frac{\delta \binom {a}{\Delta^-}\binom{\Delta^-}{\ell}\ell!}{\binom{a-\ell}{\Delta^- - \ell}}  = \delta \binom{a}{\ell} \ell! \leq \delta a^\ell.
    \]
    Since $(\phi(v))_{v \in N_u}$ is chosen uniformly at random from a subset of $A^\ell$ of size at least $b^\ell$, this implies that
    $\Pr[B_u] \leq \frac{\delta a^\ell}{b^\ell} \leq \delta (\frac{a}{b})^{\Delta^-}$.

    Let us now consider the dependencies between the bad events.
    Note that since the random variables $\{\phi(v)\}_{v \in V_1}$ are mutually independent, the event $A_{vw}$ is mutually independent from all $A_{v'w'}$ and $B_u$ such that $\{v, w\} \cap \{v', w'\} = \varnothing$ and $\{v, w \} \cap N_u = \varnothing$, respectively.
    Thus, $A_{vw}$ is dependent on at most $2(s-2)<2s$ events $A_{v'w'}$ and at most $2\Delta^+$ events $B_u$.
    Similarly, the event $B_u$ is mutually independent from all $A_{v,w}$ and $B_{u'}$ such that $N_u \cap \{v, w\} = \varnothing$ and $N_{u} \cap N_{u'} = \varnothing$, respectively, and thus it is dependent on at most $\binom{s}{2} - \binom{s - |N_u|}{2} \leq \frac{2s\Delta^- - (\Delta^-)^2 - \Delta^-}{2} < s\Delta^-$ events $A_{vw}$ and at most $\Delta^+\Delta^-$ events $B_{u'}$.

    We now want to apply the local lemma.
    For each $A_{vw}$, we let the corresponding $x_i$ be $x = \frac{8}{b}$ and for each $B_u$ we let the corresponding $x_i$ be $y = \frac{1}{2\Delta^+\Delta^-}$. Since $\Delta^- \geq 1$ and $b \geq 32s$ we then get
    \[
        x(1-x)^{2s}(1-y)^{2\Delta^+} \geq \frac{8}{b} 4^{-16s/b} 4^{-1/\Delta^-} \geq \frac{1}{b} \geq \Pr[A_{vw}],
    \]
    where we use the inequality $1-z \geq 4^{-z}$, valid for all $0 \leq z \leq \frac 12$. Similarly, we have
    \[
        y(1-y)^{\Delta^+ \Delta^-}(1 - x)^{s\Delta^- } \geq \frac{1}{2\Delta^+\Delta^-} 4^{-1/2}4^{-8s\Delta^-/b} \geq  \frac{1}{4\Delta^+\Delta^-} 4^{-\Delta^-/4}
        = \left(\frac{a}{b}\right)^{\Delta^-} \delta \geq \Pr[B_u],
    \]
    where the equality is by our choice of $\delta$.
    Therefore, by \cref{lemma:lovasz_local_lemma}, the probability that none of the bad events $A_{vw}$ and $B_u$ hold is positive and thus, there exists a choice of $\phi$ satisfying the desired properties.
\end{proof}

\subsection{Proof of Theorem \ref{theorem:upper_bound}}\label{section:proof_upper_bound}
Given \cref{lemma:dependent_random_choice,lemma:embedding}, we are now ready to prove \cref{theorem:upper_bound}. We recall that the high-level idea of the proof is to inductively construct sets $A_h,\dots,A_1$ using \cref{lemma:dependent_random_choice}, and then to embed $D$ into these sets inductively using \cref{lemma:embedding}. Essentially all that remains is to define a large number of parameters, and to check that they satisfy certain inequalities so that \cref{lemma:dependent_random_choice,lemma:embedding} can indeed be applied.
\begin{proof}[Proof of Theorem \ref{theorem:upper_bound}]
    Let $D$ be a graded graph on $n$ vertices with a graded partition $V(D) = V_1 \cup \dots \cup V_h$ for some $h \in \mathbb{N}$ and let $\Delta^+$ and $\Delta^-$ be its maximum out- and in-degree, respectively.
    Moreover, for each $i \in [h-1]$ let $\Delta_i^-$ be the maximal in-degree in the induced subgraph $D[V_i \cup V_{i+1}]$ and set $\Delta^-_0 = \Delta^-_h = 0$. Note that we may assume that $\Delta_i^-\geq 1$ for all $i \in [h-1]$, for otherwise the underlying graph of $D$ is disconnected, and we obtain the desired bound on $\vv r(D)$ by summing up $\vv r(D')$ for every connected component $D'$ of $D$.

    We define $\varepsilon = 2/\Delta^-$ and $k = 4\Delta^- + 4$, and note that $\frac{2k}{k-1} \leq 2 + \varepsilon$.
    To find the subsets $A_h, \dots, A_1$ as described above we first need to fix the relevant parameters.
    We define integers $c_s$, $c_b$, and $c_a$ by
    \[
        c_s = 32, \qquad c_b = 2000 \Delta^+ \Delta^- c_s, \qquad\text{and} \qquad c_a = 2c_b.
    \]
    We further define
    \[
        n_i = \sum_{j=i}^h \frac{(2 + \varepsilon)^{2 (\Delta_{j-1}^- + \Delta_j^-)} |V_j|}{2^{j-i}},
    \]
    and let
    \[
        a_i = c_a n_i \qquad \text{ and } \qquad b_i = c_b (2 + \varepsilon)^{-2\Delta_i^-}n_i.
    \]
    In building the sets $A_h,\dots,A_1$, we will ensure that $\ab{A_i}\geq b_i$ and that $A_i$ lies in an interval of length $a_i$. We now let
    \[
        s_i = c_s (2 + \varepsilon)^{-2(\Delta_i^- + \Delta_{i-1}^-)} n_i \qquad \text{ and } \qquad \delta_i =\frac{1}{4\Delta^- \Delta^+} \left(2^{-1/2}\frac{s_{i+1}}{b_{i+1}}\right)^{\Delta_i^-} .
    \]
    We will further guarantee that all but a $\delta_i$-fraction of the $(\Delta_i^-)$-subsets of $A_i$ have at least $s_{i+1}$ common out-neighbors in $A_{i+1}$. Finally, we let
    \[
        o_i = 2k \sum_{j=i}^h a_i.
    \]
    Throughout the process, we will ensure that $A_i$ lies within the last $o_i$ vertices of the median order.
    We note for future reference that 
    \[
        n_i = (2+\varepsilon)^{2(\Delta_{i-1}^-+\Delta_i^-)}\ab{V_i} + \frac 12 n_{i+1}.
    \]
    In particular, this implies that $a_i \geq a_{i+1}/2$ and that $s_i \geq c_s \ab{V_i}= 32|V_i|$.
    
    Let $N = o_1$, let $T$ be a tournament on $N$ vertices, and fix a median order $v_1, \dots, v_N$ of $T$.
    We now claim that we can find integers $j_1,\dots,j_h$ and disjoint sets $A_1,\dots,A_h \subseteq V(T)$ satisfying the following properties.
    \begin{itemize}
        \item $j_i < j_{i+1}$,
        \item $j_i \geq N - o_i$,
        \item $A_i \subseteq [j_i, j_i + a_i)$,
        \item $|A_i| \geq b_i$, {and}
        \item for each $i \in [h-1]$, for all but at most a $\delta_i$-fraction of subsets $S \subseteq A_i$ of size $\Delta_i^-$, we have that $|N_T^+(S) \cap A_{i+1}| \geq s_{i+1}$.
    \end{itemize}
    We start by setting $j_h = a_h$ and $A_h = [N - a_h + 1, N]$, which clearly satisfy these properties.
    Suppose now that for some $i \in [h-1]$ we have defined $A_{i'}$ and $j_{i'}$ for all $i < i' \leq h$.
    By applying  \cref{lemma:dependent_random_choice} with $j = j_{i+1}$, $B = A_{i+1}$, $a = a_{i+1}, a' = a_i \geq a_{i+1}/2, b = b_{i+1}, \ell = 2\Delta^-_i, s = s_{i+1}, k=k$ and $\Delta^- = \Delta_i^-$ we can find $j_{i+1} > j_i \geq j_{i+1} - 2ka' \geq N - o_{i+1}$ and $A_i \subseteq [j_i, j_i + a_i)$ such that
    \begin{itemize}
        \item $|A_i| \geq \frac{a_i}{2}(2+ \varepsilon)^{-2\Delta^-_i} = b_i$, {and}
        \item at most $4 \cdot (2+\varepsilon)^{2\Delta_i^-}\binom{a_i}{\Delta_i^-}(\frac{s_{i+1}}{b_{i+1}})^{2\Delta_i^-}$ subsets $S \subseteq A_i$ of size $\Delta_i^-$ have fewer than $s_{i+1}$ common out-neighbors in $A_{i+1}$.
    \end{itemize}
    We now note that, since $b_i \geq 2\Delta_i^-$, we have that
    \[
        \frac{\binom{a_i}{\Delta_i^-}}{\binom{b_i}{\Delta_i^-}}\leq 2^{\Delta_i^-} \left( \frac{a_i}{b_i} \right)^{\Delta_i^-} =  \left(2\cdot \frac{c_a(2+\varepsilon)^{2\Delta_i^-}}{c_b} \right)^{\Delta_i^-} = \left( 4(2+\varepsilon)^{2\Delta_i^-} \right)^{\Delta_i^-},
    \]
    where we plug in our definitions of $a_i,b_i,c_a$, and $c_b$. 
    Additionally, we have that
    \[
        \left( \frac{s_{i+1}}{b_{i+1}} \right)^{\Delta_i^-}= \left( \frac{c_s(2+\varepsilon)^{2\Delta_{i+1}^-}}{c_b(2+\varepsilon)^{2(\Delta_{i+1}^-+\Delta_{i}^-)} } \right)^{\Delta_i^-} = \left( \frac{c_s}{c_b (2+\varepsilon)^{2\Delta_{i}^-}} \right)^{\Delta_i^-} = \left(\frac{1}{2000\Delta^+\Delta^-(2+\varepsilon)^{2\Delta_i^-}}\right)^{\Delta_i^-}
    \]
    by our choices of $s_{i+1},b_{i+1}, c_s$, and $c_b$.
    Putting this all together, we see that the fraction of the $(\Delta_i^-)$-subsets of $A_i$ for which the common out-neighorhood is too small is at most
    \begin{align*}
        \frac{4 \cdot (2+\varepsilon)^{2\Delta_i^-}\binom{a_i}{\Delta_i^-}(\frac{s_{i+1}}{b_{i+1}})^{2\Delta_i^-}}{\binom{b_i}{\Delta_i^-}} &= \left[4(2+\varepsilon)^{2\Delta_i^-} \left( \frac{s_{i+1}}{b_{i+1}} \right)^{\Delta_i^-}\right]\cdot \left[ \frac{\binom{a_i}{\Delta_i^-}}{\binom{b_i}{\Delta_i^-}}\left( \frac{s_{i+1}}{b_{i+1}} \right)^{\Delta_i^-} \right]\\
        &\leq \left[4(2+\varepsilon)^{2\Delta_i^-} \left( \frac{s_{i+1}}{b_{i+1}} \right)^{\Delta_i^-}\right] \left( \frac{1}{500\Delta^+\Delta^-} \right)^{\Delta_i^-}\\
        &\leq \left( 2^{-1/2}\frac{s_{i+1}}{b_{i+1}} \right)^{\Delta_i^-} \left( \frac{4\cdot 2^{1/2}(2+\varepsilon)^2}{500\Delta^+\Delta^-} \right)^{\Delta_i^-}\\
        &\leq \delta_i,
    \end{align*}
    where the final step uses that $\varepsilon\leq 2$, that $\Delta_i^-\geq 1$, and our definition of $\delta_i$. This shows that the set $A_i$, as defined above, satisfies the desired properties. Continuing inductively in this way,
    we are able to find all the sets $A_1, \dots, A_h$.

    Having found these sets we now want to embed each $V_i$ into $A_i$, starting this time with $V_1$.
    For $i \in [h]$, let $S_i = \bigcup_{j =1}^i V_j$. For each $i \in [h]$ we find a function $\phi_i: S_i \to (A_1 \cup \dots \cup A_i)$ such that
    \begin{itemize}
        \item $\phi_i$ is an embedding of $D[S_i]$ into $T[A_1 \cup \dots \cup A_i]$,
        \item $\phi(V_j) \subseteq A_j$ for each $1 \leq j \leq i $, {and}
        \item for each $i \in [h-1]$ and $v \in V_{i+1}$, the vertices $\phi_i(N^-_D(v)) \subseteq A_i$ have at least $s_{i+1}$ common out-neighbors in $A_{i+1}$.
    \end{itemize}

    For $i=1$, we find $\phi_1$ by applying \cref{lemma:embedding} with $a = b = b_1$, $A = A_1$, $B = A_2$, $D = D[V_1 \cup V_2]$, $\delta = \delta_1$ and $f(v) = A_1$ for all $v \in V_1$.
    Suppose now that for some $i \in [h-2]$ we have found $\phi_i$ satisfying the conditions above.
    For $v \in V_{i+1}$, let $f(v) = N_T^+(\phi_i(N_D^-(v))) \cap A_{i+1}$ so that $|f(v)| \geq s_{i+1}$ for all $v \in V_{i+1}$.
    We can now again apply \cref{lemma:embedding} with $A = A_{i+1}$, $B = A_{i+2}$, $a = b_{i+1}$, $b = s_{i+1}$, $\delta = \delta_i$ and $D = D[V_{i+1} \cup V_{i+2}]$.
    We get a function $\phi$ such that
    \begin{itemize}
        \item for all $v \in V_{i+1}$ we have $\phi(v) \in N_T^+(\phi_i(N_D^-(v)))$, {and}
        \item for all $v \in V_{i+2}$ we have $|N_T^+(\phi(N_D^-(v)))| \geq s_{i+2}$.
    \end{itemize}
    Thus, the function
    \[
        \phi_{i+1}(v) \coloneqq \begin{cases}
            \phi(v), & v \in V_{i+1} \\
            \phi_i(v), & v \in S_i
        \end{cases}
    \]
    satisfies the conditions above.
    
    Proceeding in this way inductively, we can therefore find $\phi_{h-1}$ satisfying the same properties.
    Now, since for all $v \in V_h$ we have $|N_T^+(\phi_{h-1}(N_D^-(v)))| \geq s_h \geq |V_h|$ we can greedily extend $\phi_{h-1}$ into $\phi_h$ satisfying the above conditions. In particular, $\phi_h$ is an embedding of $D$ into $T$.
    
    To complete the proof, it remains to estimate $N$, the number of vertices in $T$.
    Plugging in our choice of $\varepsilon = 2/\Delta^-$, as well as the estimates $1+x\leq e^x$ and $\Delta_j^-\leq \Delta^-$ for all $j$, we find that
    \[
        (2+\varepsilon)^{2\Delta_{j-1}^- + 2\Delta_{j}^-} = (2(1+\varepsilon/2))^{{2\Delta_{j-1}^- + 2\Delta_{j}^-}} \leq  e^{4} 2^{2\Delta_{j-1}^- + 2\Delta_{j}^-}.
    \] 
    Additionally, we have that
    \begin{align*}
        \sum_{i=1}^h n_i &= \sum_{i=1}^h \sum_{j=i}^h \frac{(2 + \varepsilon)^{2 (\Delta_{j-1}^- + \Delta_j^-)} |V_j|}{2^{j-i}} \\
        &= 
        \sum_{j=1}^h (2 + \varepsilon)^{2 (\Delta_{j-1}^- + \Delta_j^-)} |V_j| \sum_{i=1}^j \frac 1{2^{j-i}}\\
        &\leq 2 \sum_{j=1}^h (2 + \varepsilon)^{2 (\Delta_{j-1}^- + \Delta_j^-)} |V_j|.
    \end{align*}
    Therefore,
    \[
        o_1 = 2k \sum_{i=1}^h a_i = 2kc_a \sum_{i=1}^h n_i \leq 4kc_a \sum_{j=1}^h (2+\varepsilon)^{2\Delta_{j-1}^- + 2\Delta_{j}^-} |V_j| \leq c (\Delta^-)^2 \Delta^+ \sum_{j=1}^h 2^{2(\Delta_{j-1}^- + \Delta_{j}^-) } |V_j|,
    \]
    for some absolute constant $c=4\cdot 5\cdot 32 \cdot 2000 \cdot 2\cdot e^4 \leq 10^9$.
    Since $T$ was an arbitrary tournament on $N=o_1$ vertices we have shown that 
    \[
        \vv{r}(D) \leq 10^9 (\Delta^-)^2 \Delta^+ \sum_{i=1}^h 2^{2(\Delta_{i-1}^- + \Delta_{i}^-) } |V_i|.\qedhere
    \]
 \end{proof}

\section{Proof of Theorem \ref{theorem:lower_bound}}\label{section:lower_bound}
\subsection{Proof outline}
In this section, we prove \cref{theorem:lower_bound}, which states that there exists a graded digraph $D$ with height $h$, maximum degree $\Delta$, and equal number of vertices in each part of the graded partition, such that $\vv r(D) \geq c^\Delta \ab{V(D)}$ for an absolute constant $c>1$.

The main ingredient of the proof is the same statement in the case $h=2$, from which the general statement will follow.
In other words, we first show that exists a bipartite digraph $D_0$ with vertex classes of size $n$ each and maximum degree at most $\Delta$ such that $\vv{r}(D_0) \geq c^\Delta \cdot(2n)$.

Having found such a bipartite digraph, we will generalize the construction for any height $h$, by taking a graded digraph $D$ such that the induced subgraph between two neighboring parts of the graded partition is a copy of $D_0$.
We will show that such a $D$ is not contained in a tournament $T$ obtained by replacing each vertex of a transitive tournament on $H = h/2 - 1$ vertices with a copy of $R$, a large tournament not containing $D_0$.

Indeed, if we let $D$ have the graded partition $V(D) = V_1 \cup \dots \cup V_h$ and let $V(T) = A_1 \cup \dots \cup A_{H}$ such that each $T[A_i]$ is a copy of $R$ and all the edges go from $A_i$ to $A_j$ for $i < j$, then we show the following.
If an embedding of $D$ into $T$ exists, then for any $i$, if we embedded most of $V_i$ into $A_j \cup \dots \cup A_H$ for some $j$, then most of $V_{i+2}$ must be embedded into $A_{j+1} \cup \dots \cup A_H$.
This will give us a contradiction since there are only $H < h/2$ levels in the tournament $T$.

It remains to construct $D_0$ and $R$ such that the argument above works. We will use a construction very similar to the one of Graham, Rödl and Ruciński \cite{graham2001bipartite}. Namely, we show that if we take $R$ to be a blow-up of a random tournament and $D_0$ a sparse random bipartite digraph, then with positive probability $R$ does not contain a copy of $D_0$.
To make the generalization for any height possible, we will in fact need to show a slightly stronger statement, namely that if we take any two large subsets $A'$ and $B'$ of the two vertex classes of $D_0$, then $R$ does not contain a copy of the induced subgraph $D_0[A' \cup B']$.

The remainder of this section contains the details of the argumetnt. We first construct a suitable bipartite digraph $D_0$ and a suitable tournament $R$ in \cref{section:guest_graph,section:host_graph} respectively.
Then in  \cref{section:bipartite_case} we show that $R$ indeed does not contain a copy of $D$.
Finally, in \cref{section:proof_lower_bound} we give a proof of \cref{theorem:lower_bound} by generalizing the construction to all heights.
\begin{remark}
    As in \cite{graham2001bipartite}, we quantify the respective sizes of the sets and other quantities with concrete numerical values.
    For example, ``large'' subsets of the vertex classes $A$ and $B$ of $D_0$ will mean $A' \subseteq A$ and $B' \subseteq B$ of sizes at least $0.98|A|$ and $0.98|B|$, respectively.
    However, we want to stress out that the actual numerical values are not of much importance; what matters is that the dependencies between them work out correctly.  
\end{remark}

\subsection{Finding the guest graph for the bipartite case}\label{section:guest_graph}
We begin by constructing the bipartite digraph $D_0$, which we will require to have bounded degree and satisfy certain pseudorandom properties. The following lemma, showing a graph with such properties can be constructed randomly, follows from a standard union bound argument, which we include for completeness. We use the notation $e_H(X,Y)$ to denote the number of pairs in $X \times Y$ that are edges of $H$.
\begin{lemma}\label{lemma:guest_graph}
    There exist constants $c_0 > c_1 > 1$ and $\Delta_0$ such that for each $\Delta \geq \Delta_0$ and $n \geq (c_0)^{2\Delta}$ there exists a bipartite graph $H$ with vertex classes $X$ and $Y$ of size $n$ and maximum degree at most $\Delta$ such that the following hold, where $k=(c_0)^{\Delta}$.
    \begin{enumerate}
        \item For all partitions $X = X_1 \cup \dots \cup X_k \cup D_X$ and $Y = Y_1 \cup \dots \cup Y_k \cup D_Y$ with $|X_i|, |Y_i| \leq (c_1 / c_0)^\Delta n$ and $|D_X|, |D_Y| \leq 0.02n$, we have
        \[
            \sum_{i \neq j: e_H(X_i, Y_i) > 0} |X_i||Y_j| > 0.55(0.98n)^2.
        \]
        \item For all $X' \subseteq X$ and $Y' \subseteq Y$ such that $|X'|, |Y'| \geq 0.01n$, we have $e_H(X', Y') > 0$.
    \end{enumerate}
\end{lemma}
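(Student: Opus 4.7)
The plan is to apply the probabilistic method, following a strategy analogous to that of Graham--R\"odl--Ruci\'nski in \cite{graham2001bipartite}. I will take $H$ to be the random bipartite graph on $X \sqcup Y$ in which each potential edge is included independently with probability $p = \alpha \Delta/n$, for a small absolute constant $\alpha \in (0, 1/e)$, and show that all three properties hold simultaneously with positive probability. The constants $c_0 > c_1 > 1$, $\alpha$, and $\Delta_0$ will be calibrated at the end so that every numerical inequality balances.

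The maximum-degree bound and Property 2 are standard. By Chernoff, $\Pr[\deg_H(v) > \Delta] \leq \binom{n}{\Delta}p^\Delta \leq (e\alpha)^\Delta$, and a union bound over the $2n$ vertices is $o(1)$ provided $\log n = O(\Delta)$, which holds in our regime as long as $c_0$ is bounded above by an absolute constant. For Property 2, any fixed pair $X' \subseteq X$, $Y' \subseteq Y$ with $|X'|, |Y'| \geq 0.01n$ has no edge between them with probability at most $(1-p)^{(0.01n)^2} \leq \exp(-10^{-4}\alpha \Delta n)$, which comfortably absorbs the $\leq 4^n$ union bound for $\Delta \geq \Delta_0$ sufficiently large.

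The main work is Property 1. Reading the sum condition as $e_H(X_i, Y_j) > 0$, the desired inequality becomes equivalent to the bound $S := \sum_{i \neq j,\, e_H(X_i, Y_j) = 0} |X_i||Y_j| < 0.42 n^2$, since $\sum_{i \neq j}|X_i||Y_j| \geq (0.98n)^2 - sn \geq 0.95 n^2$ once $s := (c_1/c_0)^\Delta n \leq 0.01 n$. The $k^2$ indicator variables $\mathbf{1}[e_H(X_i, Y_j) = 0]$ depend on disjoint edge-sets of $H$ and are therefore independent, and the convexity bound $z(1-p)^z \leq 1/(ep)$ yields
\[\mathbb{E}[S] \leq \sum_{i,j} |X_i||Y_j|(1-p)^{|X_i||Y_j|} \leq \frac{k^2}{ep} \leq \frac{n^2}{\alpha e \Delta}\]
using $k^2 = c_0^{2\Delta} \leq n$, which is negligible compared to $0.42n^2$.

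The core difficulty is obtaining concentration of $S$ around its mean strong enough to absorb a union bound over the $(k+2)^{2n} \leq \exp(O(n\Delta))$ valid partitions. My plan is a dyadic decomposition of the pairs $(i,j)$ by the magnitude of $|X_i||Y_j|$ into $O(\log n)$ buckets, analysed separately. For pairs with $|X_i||Y_j| \gg 1/p$, the individual no-edge probability is exponentially small, so a single global union bound shows that with high probability such pairs contribute zero to $S$; for pairs with $|X_i||Y_j| \lesssim 1/p$, the per-bucket contribution reduces (up to a factor of $2$) to a scaled Binomial, and Chernoff provides exponential concentration at a rate that, after careful arithmetic, exceeds $n\log(k+2)$ in each bucket. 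The delicate point is parameter calibration: $\alpha$ must be small enough for the max-degree bound but $\alpha \Delta$ large enough for Property 2, and the ratio $c_0/c_1$ must be placed in a window large enough for the bucket-wise rates to close the union bound. Once $c_0, c_1, \alpha$ are in the appropriate ranges and $\Delta_0$ is sufficiently large, all three failure probabilities sum to less than $1$, delivering the desired $H$ by the probabilistic method.
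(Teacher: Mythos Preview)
Your approach has a genuine gap in the maximum-degree step. The lemma requires the construction to work for \emph{every} $n \geq c_0^{2\Delta}$, so $n$ is only bounded below, not above; your assertion that ``$\log n = O(\Delta)$ holds in our regime'' is simply false. In $G(n,n,p)$ with $p = \alpha\Delta/n$, each vertex has degree exceeding $\Delta$ with probability of order $(e\alpha)^\Delta$, a quantity depending only on $\Delta$; once $n \gg (e\alpha)^{-\Delta}$ the expected number of high-degree vertices tends to infinity, and a.a.s.\ your graph violates the degree constraint. No union bound can repair this, and the rest of your argument relies on having the full random graph, not a deletion of it. The paper handles this by a different device: it samples a random bipartite graph on $1.01n + 1.01n$ vertices with a \emph{fixed} number $dm$ of edges (where $d = \Delta/101$), then deletes the $n/100$ vertices of largest degree from each side. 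Since at most $dm/(\Delta+1) < n/100$ vertices can have degree exceeding $\Delta$, the degree bound holds \emph{deterministically} after deletion, irrespective of $n$. The two pseudorandom properties are then established for the pre-deletion graph, with the exception sets $D_X, D_Y$ and the $0.02n$ slack absorbing the deleted vertices.

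For Property~1, your concentration-plus-dyadic-buckets plan is plausible in outline but you would need considerably more than ``careful arithmetic'' to close it, since the per-bucket Chernoff rates must all individually beat $\exp(-\Omega(n\Delta))$. The paper sidesteps concentration entirely: it union-bounds over the at most $2^{k^2}$ possible zero-patterns $\{(i,j): e_H(X_i,Y_j)=0\}$, observes that any bad pattern forces all edges to avoid a region of area at least $0.2m^2$, and bounds that probability directly in the fixed-edge-count model. With $k^2 \leq n$ this extra $2^{k^2}$ factor is harmless, and the whole calculation collapses to a single line.
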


\begin{proof}
    We take any $c_0,c_1$ satisfying $1 < c_1^2 < c_0 < (5/4)^{1/202}$ and choose $\Delta_0$ so that $(c_1^2/c_0)^{\Delta_0} < 0.1$, $((0.8)^{1/101}c_0^2)^{\Delta_0} < 1/16$ and $(1 - 10^4)^{\Delta_0/101} < 1/8$. Note that we can choose such a $\Delta_0$ since all three of these inequalities are satisfied for sufficiently large $\Delta_0$. Let moreover $\Delta \geq \Delta_0$, $d = \Delta/101$, and $m = 1.01n$.

    To obtain our graph $H$, we will first draw a bipartite graph $G$ uniformly at random from the set of all bipartite graphs with $dm$ edges and with vertex classes $V'$ and $V''$ of size $m$ each. 
    Then we will remove the $n/100$ largest degree vertices on each side to obtain $H$.
    Since the number of vertices of degree larger than $\Delta$ in $D$ is at most $\frac{dm}{\Delta + 1} < \frac{m}{101}=\frac n{100}$, the maximum degree of $H$ is at most $\Delta$ with probability $1$.
    It thus suffices to show that $H$ will also satisfy the other two properties with positive probability.

    For the first one, let us bound the probability that there exist partitions $V' = V_1' \cup \dots \cup V_k' \cup D_X \cup D'$ and $V'' = V_1'' \cup \dots \cup V_k'' \cup D_Y \cup D''$ with $|D'| = |D''| = n/100$,  $|D_X|, |D_Y| \leq n/50$, and $|V_i'|, |V_i''| \leq (c_1/c_0)^\Delta n$ for all $i \in [k]$, such that
    \[
        \sum_{i \neq j: e_H(V_i',V_j'') > 0} |V_i'||V_j''| \leq 0.55(0.98n)^2.
    \]
    To do that, notice that since 
    \[
        \sum_{i=1}^k |V_i'||V_i''| \leq k \left(\left(\frac{c_1}{c_0}\right)^{\Delta}n\right)^2 = \left(\frac{c_1^2}{c_0}\right)^\Delta n^2 \leq \left(\frac{c_1}  {c_0}\right)^{\Delta_0} n^2 < 0.1n^2,
    \]
    such a partition must satisfy
    \[
        \sum_{i \neq j: e_G(V_i', V_j'') = 0} |V_i'||V_j''| \geq (0.98n)^2 - 0.1n^2 - 0.55(0.98n)^2 \geq 0.2m^2.
    \]
    Therefore, by the union bound, the probability that such a partition exists is at most
    \[
        (k+2)^{2m} 2^{k^2} \frac{\binom{0.8m^2}{dm}}{\binom{m^2}{dm}} < (2k)^{2m} 2^{k^2} (0.8)^{dm} < 8^m ((0.8)^{1/101}c_0^2)^{\Delta_0 m} < \frac 12,
    \]
    where $(k+2)^{2m}$ is a bound on the number of partitions, $2^{k^2}$ bounds the number of possible choices of pairs $(V_i', V_j'')$ with no edges in between them and ${\binom{0.8m^2}{dm}}/{\binom{m^2}{dm}}$ is a bound on the probability that indeed no edges fall between them.

    Similarly, the probability that there exist $X' \subseteq X$ and $Y' \subseteq Y$ of sizes at least $0.01n$ each such that $e_H(X', Y') = 0$ is at most
    \[
        2^{2m} \frac{\binom{(1 - 10^4)m^2}{dm}}{\binom{m^2}{dm}} < 2^{2m} (1 - 10^4)^{dm} \leq 2^{2m} (1 - 10^4)^{\Delta_0 m / 101} < \frac 12,
    \]
    where $2^{2m}$ bounds the number of choices of $X'$ and $Y'$ and the fraction bounds the probability that there are no edges between them.
    Thus, by the union bound, there exists a choice of $G$ such that both properties are satisfied, implying the existence of the desired $H$. 
\end{proof}

\subsection{Finding the host tournament for the bipartite case}\label{section:host_graph}
Our next lemma is of a similar flavor to \cref{lemma:guest_graph}, showing that a random object typically satisfies a certain pseudorandom property. In this case, we show that a random tournament typically has many directed edges from any large set to any other large set. We actually prove something slightly more general, which says the same not for sets, but for ``weighted sets'', that is, for functions valued in $[0,1]$. Here, and in the rest of the proof, all logarithms are to base $e$.
\begin{lemma}\label{lemma:host_graph}
    Let $k \geq 2$ and $x \geq (10^8 \log k) /2$. There exists a tournament $R$ with vertex set $[k]$ such that for all pairs of weight functions $f, g: [k] \to [0,1]$ with $f+g \leq 1$ and $\sum_{i=1}^k (f(i) + g(i)) = 2x$, we have
    \[
        W \coloneqq \sum_{ij \in E(R)} f(i)g(j) \leq 0.51x^2.
    \]
\end{lemma}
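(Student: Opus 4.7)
The plan is to take $R$ to be a uniformly random tournament on $[k]$ and show that with positive probability, it satisfies the required bound for all valid $(f, g)$ simultaneously. The proof has two components: a concentration bound handling the case when $f = \mathbf 1_A$ and $g = \mathbf 1_B$ are indicators of disjoint sets, and a random rounding argument reducing the general weighted case to the $0/1$ case.

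For the $0/1$ case, fix disjoint $A, B \subseteq [k]$ with $|A| + |B|$ at most some $n_0$ slightly larger than $2x$ (say $n_0 = 2.002\,x$). Then $|E_R(A, B)|$ is a sum of $|A||B|$ independent $\mathrm{Bernoulli}(1/2)$ random variables with mean at most $(n_0/2)^2/2 \leq 0.502\,x^2$, so Hoeffding's inequality yields
$$\Pr\bigl[|E_R(A, B)| \geq 0.507\,x^2\bigr] \leq \exp\bigl(-\Omega(x^2)\bigr).$$
The number of valid pairs $(A, B)$ is at most $\binom{k}{\leq n_0} 2^{n_0} = \exp(O(x \log k))$, and since $x \geq 5\cdot 10^7 \log k$ we have $x \log k \leq x^2/(5\cdot 10^7)$, so the Hoeffding tail dominates the union bound. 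Hence, with positive probability over the choice of $R$, we have $|E_R(A, B)| \leq 0.507\,x^2$ for every such pair simultaneously. Fix such an $R$.

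For the reduction, let $(f, g)$ be arbitrary. Sample a random pair of disjoint sets $(A, B)$ by independently placing each $i \in [k]$ into $A$ with probability $f(i)$, into $B$ with probability $g(i)$, and into neither with probability $1 - f(i) - g(i) \geq 0$. By independence across coordinates, $\mathbb E|E_R(A, B)| = \sum_{ij \in E(R)} f(i) g(j) = W(f, g)$ and $\mathbb E[|A| + |B|] = 2x$. Chernoff's inequality gives $\Pr[|A| + |B| > n_0] \leq \exp(-\Omega(x))$, which by the assumption on $x$ is much smaller than $k^{-2}$. Combined with the trivial bound $|E_R(A, B)| \leq k^2/2$ on the bad event, this gives
$$\mathbb E\bigl[|E_R(A, B)|\cdot \mathbf 1\{|A|+|B| \leq n_0\}\bigr] \geq W(f, g) - 1,$$
so some realization satisfies both $|A| + |B| \leq n_0$ and $|E_R(A, B)| \geq W(f, g) - 1$. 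Applying the first step to this realization yields $W(f, g) \leq 0.507\,x^2 + 1 \leq 0.51\,x^2$, as required.

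The main obstacle is the reduction from continuous to $0/1$ weights. A naive discretization would require union-bounding over $\exp(\Theta(k))$ weight vectors, which is prohibitive when $k$ is much larger than $x$; here we use the random rounding trick to produce, for each specific $(f, g)$, a single $0/1$ witness whose edge count is essentially $W(f, g)$. What remains is purely bookkeeping: the slacks introduced by replacing $2x$ with $n_0 = 2.002\,x$, by the Hoeffding deviation, and by the Chernoff deviation must all fit within the gap between $\mathbb E W \leq x^2/2$ and the target $0.51\,x^2$.
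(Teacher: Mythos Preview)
Your proof is correct. Both your argument and the paper's take $R$ uniformly at random and use Chernoff/Hoeffding plus a union bound over all disjoint pairs $(A,B)$ of bounded total size to control the $0/1$ case; the constant $10^8$ in the hypothesis is exactly what makes this union bound go through, and you use it in the same way.

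The genuine difference is in the reduction from weighted $(f,g)$ to $0/1$ weights. The paper does this \emph{deterministically}: for any fixed tournament $R$, the map $(f,g) \mapsto W$ is bilinear, so its maximum over the polytope $\{f,g \in [0,1]^k : f+g \le 1,\ \sum(f+g)=2x\}$ is attained at a vertex, which (after a short weight-shifting argument) forces $f,g$ to be indicator functions except possibly at one coordinate each. You instead do the reduction \emph{probabilistically}, randomly rounding $(f,g)$ to indicators and using linearity of expectation together with a Chernoff bound on $|A|+|B|$ to produce a single $0/1$ witness with $e_R(A,B) \ge W-1$ and $|A|+|B| \le 2.002x$. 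The paper's route is slightly cleaner in that it avoids the secondary concentration step and the $+1$ error term; your route is arguably more modular and sidesteps the bookkeeping for the two fractional coordinates $i_0,j_0$ that the paper absorbs into an additive $2x$. Both are standard moves and the numerical slack $(0.51-0.5)x^2$ comfortably accommodates either.
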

\begin{proof}
    Note that $2x = \sum_{i=1}^k (f(i)+g(i)) \leq k$, hence the statement is vacuous if $2x>k$, as in this case there exist no such functions $f,g$. Thus, we assume henceforth that $2x \leq k$, and in particular that $k > 10^8 \log 2$. We let $R$ be a uniformly random tournament with vertex set $[k]$.

    We first claim that we can assume that there exist $i_0 \neq j_0$ such that for all $i \neq i_0$ and all $j \neq j_0$ we have
    \[
        f(i), g(j) \in \{0,1\}.
    \]
    Indeed, for any fixed outcome of $R$, suppose that $f$ and $g$ maximize $W$ and that there exists an $i$ such that $0 < f(i), g(i) < 1$.
    Now consider the sums $W_f(i) \coloneqq \sum_{j:ij \in E(R)} g(j)$ and $W_g(i) \coloneqq \sum_{j: ji \in E(R)} f(j)$.
    If $W_f(i) \geq W_g(i)$ then define new functions $f'$ and $g'$ such that there are equal to $f$ and $g$ except that $f'(i) = f(i) + g(i)$ and $g'(i) = 0$.
    Otherwise, we set $f'(i) = 0$ and $g'(i) = f(i) + g(i)$.
    In either case, we have $W' \geq W$ for the corresponding quantity $W'$.

    Thus, we can assume that $\min \{ f(i), g(i)  \} = 0$ for all $i$. 
    Now suppose that there exist $i \neq j$ such that $0 < g(i), g(j) < 1$.
    Again in case $W_g(i) \geq W_g(j)$, we let $\varepsilon_{ij} = \min \{ g(j), 1 - g(i) \}$ and let $g'(i) = g(i) + \varepsilon_{ij}$ and $g'(j) = g(j) - \varepsilon_{ij}$.
    Otherwise, we do the same with $i$ and $j$ swapped and in both cases we get $W' \geq W$.
    By the same argument, we can also assume that for at most one $i_0$ we have $0 < f(i_0) < 1$.

    Now, assuming $f$ and $g$ satisfy the property above, define $T = \{i: f(i) = 1\}$ and $S = \{ j: g(j) = 1\}$ and let $t = |T|, s = |S|$.
    By our assumptions, we have that $t+s \leq 2x < t + s + 2$ and $2x \leq k$.
    Additionally, we have
    \[
    W = \sum_{ij \in E(R)} f(i)g(j) = e_R(T, S) + g(j_0)W_g(j_0) + f(i_0)W_f(i_0) \leq e_R(T, S) + 2x.
    \]
    In particular, if $W > 0.51x^2$ we find that $$e_R(T,S) > 0.51x^2 - 2x \geq 0.501x^2 \geq 0.501\frac{(s+t)^2}{4}.$$
    We now claim that with positive probability (over the randomness in $R$), there exist no sets $S,T \subseteq V(R)$ satisfying this inequality.
    Suppose first that $t \leq s$.
    Note that we must have $t > s/7$ since otherwise $e_R(T, S)  \leq ts < 0.5(t+s)^2 / 4$.
    Similarly, we must have $s > s_0 \coloneqq 2 \cdot 10^7 \log k$.
    For any fixed disjoint $T,S$ we have that $e_R(T, S) \sim \operatorname{Bin}(ts, \frac 12)$ and thus by Chernoff's inequality, and using $(t+s)^2/4 \geq ts \geq s^2/7$, we get
    \[
        \Pr[e_R(T, S) > 0.501 (t+s)^2 /4] \leq \Pr[e_R(T, S) > 0.501ts] < e^{-10^{-7}s^2}.
    \]
    Moreover, for $s>s_0$, we have
    \[
        e^{-10^{-7}s^2} < e^{-10^{-7}s\cdot s_0}=k^{-2s}.
    \]
    Therefore, the probability that such $T$ and $S$ exist is at most
    \[
        \sum_{s=s_0}^{k} \sum_{t=s/7}^{s} \binom{k}{s}\binom{k}{t}e^{-10^{-7}s^2} \leq \sum_{s=s_0}^{k} \sum_{t=s/7}^{s} \left(\frac{ek}s\right)^{2s}k^{-2s} \leq k \sum_{s=s_0}^{k}\left(\frac{e^2}{s^2}\right)^s < k^2\left(\frac{e^2}{s_0^2}\right)^{s_0} < \frac 12,
    \]
    where in the first inequality we use that since $t \leq s$, we have that $\binom{k}{s} \binom{k}{t} \leq \binom{k}{s}^2 < (ek/s)^{2s}$. By interchanging the roles of $s$ and $t$, we obtain the same bound in case $t \geq s$. Thus,  we find that $R$ satisfies the desired property with positive probability.
\end{proof}

\subsection{The bipartite case}\label{section:bipartite_case}
We are now ready to prove \cref{theorem:lower_bound} in the case $h=2$, that is, when $D$ is bipartite. As discussed above, this step is actually the heart of the proof, as the proof for arbitrary $h$ will essentially be a reduction to this case.
\begin{lemma}\label{lemma:bipartite_lower_bound}
    There exist constants $c' > 1$ and $\Delta_0$ such that for all $\Delta \geq \Delta_0$ and $n \geq \Delta $ the following holds. 
    There exists a bipartite digraph $D_0 = (A \cup B, E)$ with $|A| = |B| \leq n$ and maximum degree $\Delta$  such that all its edges are directed from $A$ to $B$, as well as a tournament $R$ on $(c')^\Delta n$ vertices such that for any $A' \subseteq A$ and $B' \subseteq B$, the following hold.
    \begin{enumerate}
        \item If $|A'|, |B'| \geq 0.98|A|$ then there is no copy of $D_0[A' \cup B']$ in $R$, {and}
        \item if $|A'|, |B'| \geq 0.01|A|$ then $e_{D_0}(A', B') > 0$.
    \end{enumerate}
\end{lemma}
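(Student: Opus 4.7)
The plan is to construct $D_0$ by orienting the bipartite graph $H$ from \cref{lemma:guest_graph} so that all edges point from $A := X$ to $B := Y$, and to take $R$ to be a blow-up of the tournament from \cref{lemma:host_graph}. Concretely, first apply \cref{lemma:guest_graph} with parameter $n$ to obtain $H$ on vertex classes $X, Y$ of size $n$ with maximum degree $\Delta$, and orient every edge of $H$ from $X$ to $Y$. Next, let $k := c_0^\Delta$, let $R_0$ be the tournament on $[k]$ given by \cref{lemma:host_graph}, and construct $R$ as the blow-up of $R_0$ in which each vertex of $R_0$ is replaced by an arbitrarily oriented tournament on $m' := (c_1/c_0)^\Delta n$ vertices. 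Then $|V(R)| = k m' = c_1^\Delta n$, so I set $c' := c_1$.

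Property (2) of the conclusion follows immediately from condition (2) of \cref{lemma:guest_graph}. For property (1), suppose towards contradiction that there is an embedding $\phi$ of $D_0[A' \cup B']$ into $R$ with $|A'|, |B'| \ge 0.98 n$. Let $\pi \colon V(R) \to [k]$ denote the cluster-projection and define $X_i := \{a \in A' : \pi(\phi(a)) = i\}$, $Y_i := \{b \in B' : \pi(\phi(b)) = i\}$ for $i \in [k]$, together with $D_X := X \setminus A'$ and $D_Y := Y \setminus B'$. These yield partitions of $X$ and $Y$ with $|D_X|, |D_Y| \le 0.02 n$ and, since $\phi$ is injective into clusters of size $m'$, with $|X_i| + |Y_i| \le m'$, hence $|X_i|, |Y_i| \le (c_1/c_0)^\Delta n$. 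Condition (1) of \cref{lemma:guest_graph} therefore yields
\[
\sum_{i \ne j :\, e_H(X_i, Y_j) > 0} |X_i|\,|Y_j| > 0.55 \,(0.98 n)^2.
\]
The crucial observation is that each pair $i \ne j$ with $e_H(X_i, Y_j) > 0$ forces $(i,j) \in E(R_0)$: some edge of $D_0[A' \cup B']$ from $X_i$ to $Y_j$ must be mapped by $\phi$ to an inter-cluster edge of $R$, whose direction is determined by $R_0$. Thus the left-hand side is bounded above by $\sum_{ij \in E(R_0)} |X_i|\,|Y_j|$.

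To bound this latter quantity, apply \cref{lemma:host_graph} to $f(i) := |X_i|/m'$ and $g(i) := |Y_i|/m'$, which satisfy $f + g \le 1$: the weight parameter $x = (|A'|+|B'|)/(2m') \ge 0.98 \,(c_0/c_1)^\Delta$ exceeds $\tfrac{1}{2} \cdot 10^8 \log k = \tfrac{1}{2} \cdot 10^8 \Delta \log c_0$ for $\Delta$ sufficiently large, and the lemma then produces $\sum_{ij \in E(R_0)} |X_i|\,|Y_j| \le 0.51 \,x^2 (m')^2 \le 0.51 \,n^2$. Since $0.55 \cdot 0.98^2 > 0.51$, the two inequalities contradict one another, which is the desired contradiction. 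The main obstacle is really just parameter juggling: one must choose $m'$ small enough that condition (1) of \cref{lemma:guest_graph} applies to the induced partition, yet large enough that the weight $x$ satisfies the hypothesis of \cref{lemma:host_graph}, all while ensuring that $R$ has the required $(c')^\Delta n$ vertices. Once the parameters are aligned, the proof is a clean counting argument: a hypothetical embedding into the blow-up induces a partition of $X, Y$ whose \emph{edge support} is constrained by $E(R_0)$, and the two pseudorandomness properties then collide.
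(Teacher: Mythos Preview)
Your argument for the main case is correct and essentially identical to the paper's: orient $H$ from \cref{lemma:guest_graph}, take $R$ to be a blow-up of the tournament from \cref{lemma:host_graph}, and derive a contradiction between the two pseudorandomness properties. The counting is clean and the parameter choices line up.

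However, there is a genuine gap: you invoke \cref{lemma:guest_graph} ``with parameter $n$'' without checking its hypothesis $n \geq c_0^{2\Delta}$. The statement you are proving only assumes $n \geq \Delta$, so for $\Delta \leq n < c_0^{2\Delta}$ the graph $H$ is simply not available, and your construction breaks down (indeed, your cluster size $m' = (c_1/c_0)^\Delta n$ need not even be at least $1$ in this range). The paper handles this small-$n$ regime separately by taking $D_0 = \vv{K}_{\Delta,\Delta}$ with $|A|=|B|=\Delta$: any $D_0[A'\cup B']$ with $|A'|,|B'|\geq 0.98\Delta$ then contains a large complete bipartite digraph, and a standard first-moment computation shows that a random tournament on roughly $2^{0.49\Delta}$ vertices avoids it. This forces the paper to take $c' = \min\{c_1, 2^{0.3}/c_0\}$ rather than your $c' = c_1$, so that $(c')^\Delta n$ stays below the size of the avoiding tournament in both regimes. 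You need to add this case (or some equivalent treatment of small $n$) and adjust $c'$ accordingly.
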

\begin{proof}
    Let $c_0, c_1$ and $\Delta_0$ be the constants from \cref{lemma:guest_graph}. By potentially increasing $\Delta_0$ further, we may also assume that $(c_0 / c_1)^{\Delta} > 10^9 \Delta \log c_0$ for all $\Delta \geq \Delta_0$.
    We let $1 < c' = \min\{ c_1, 2^{0.3}/c_0\}$.

    If $\Delta \geq \Delta_0$ and $\Delta  \leq n < \frac{1}{0.98}2c_0^\Delta$, let $D_0$ be the oriented complete bipartite graph $\vv{K}_{\Delta ,  \Delta }$ with vertex classes $A$ and $B$ and where all the edges are oriented from $A$ to $B$.
    Clearly, for any non-empty $A' \subseteq A$ and $B \subseteq B'$ we have $e_D(A', B') > 0$.
    To show that the first property also holds, note that if $|A'|, |B'| > 0.98 \Delta $ then $D_0[A' \cup B']$ has the complete bipartite graph $\vv{K}_{\lfloor0.98\Delta\rfloor, \lfloor0.98\Delta\rfloor}$ as a subgraph.
    Therefore, since the probability that a uniformly random tournament $R$ on $N = 2^{\lfloor0.98\Delta\rfloor/2} \geq (c')^\Delta n$ vertices contains a copy of $\vv{K}_{\lfloor0.49\Delta\rfloor, \lfloor0.49\Delta\rfloor}$ is at most $\binom{N}{\lfloor0.98\Delta\rfloor}\binom{N}{\lfloor0.98\Delta\rfloor} 2^{-\lfloor0.98\Delta\rfloor^2} < N^{2\lfloor0.98\Delta\rfloor} 2^{-\lfloor0.98\Delta\rfloor^2} \leq 1$, there is a choice of $R$ such that the conditions of the lemma are satisfied.

    Otherwise, we have $0.98n \geq 2c_0^\Delta$.
    Let then $D_0$ be a digraph obtained by taking the graph $H$ from Lemma \ref{lemma:guest_graph} and orienting every edge from $A$ to $B$, and let $R'$ be the tournament from Lemma \ref{lemma:host_graph}.
    We obtain a tournament $R$ by taking the blow-up of $R'$ in the following way.
    Let $N = c_1^\Delta n \geq (c')^\Delta n$, $k = c_0^\Delta$ and partition $[N] = U_1 \cup \dots \cup U_k$ such that $|U_i| = N/k$. 
    Let $R$ be an arbitrary tournament on the vertex set $[N]$ such that for all $ij \in E(R')$ we have $U_i \times U_j \subseteq E(R)$. That is, the edges between distinct $U_i,U_j$ form a blown-up copy of $R'$, and the edges inside any $U_i$ are oriented arbitrarily.
    
    $D_0$ satisfies the second condition of the lemma by \cref{lemma:guest_graph}.
    Suppose now for contradiction that for some $A' \subseteq A$ and $ B' \subseteq B$ with $|A'|, |B'| \geq 0.98n$ there is a copy of $D_0[A' \cup B']$ in $R$ and let $X$ and $Y$ be its two vertex classes.
    By Lemma \ref{lemma:guest_graph} we have that for $X_i \coloneqq X \cap U_i$ and $Y_i \coloneqq Y \cap U_i$ it holds
    \[
        \sum_{ij \in E(R')} |X_i||Y_j| \geq \sum_{i \neq j: e_{D}(X_i, X_j) > 0} |X_i||Y_j| > 0.55(0.98n)^2.
    \]
    For $i \in [k]$, let now $f(i) = \frac{|X_i|k}{N}$ and $g(i) = \frac{|Y_i|k}{N}$. We have $0 \leq f+g \leq 1$ and
    \[
        2x \coloneqq \sum_i (f(i) + g(i)) = \frac k N(\ab X + \ab Y)\geq  2 \cdot 0.98 \frac{nk}{N} = 1.96\left( \frac{c_0}{c_1} \right)^{\Delta} \geq 10^9 \Delta \log c_0 > 10^8 \log k.
    \]
    Therefore, by \cref{lemma:host_graph},
    \[
        \sum_{ij \in E(R')} |X_i||Y_j| = \frac{N^2}{k^2} \sum_{ij \in E(R)}f(i)g(j) < \frac{N^2}{k^2} 0.51x^2 \leq 0.51 (0.98n)^2,
    \]
    a contradiction. This shows that there is no copy of $D_0[A' \cup B']$ in $R$ for any such $A'$ and $B'$. 
\end{proof}

\subsection{Proof of Theorem \ref{theorem:lower_bound}}\label{section:proof_lower_bound}
With the ingredients above, we are ready to prove \cref{theorem:lower_bound}. 
\begin{proof}[Proof of Theorem \ref{theorem:lower_bound}]
    Let $c'>1$ and $\Delta'_0$ be the constants from Theorem \ref{lemma:bipartite_lower_bound} and set $\Delta_0$ to be a constant such that $\Delta_0 \geq \Delta_0'$ and $(c')^{\Delta_0/2} > 4$.
    Moreover, let $c>1$ be a constant such that $c^{\Delta_0} = (c')^{\Delta_0/2}/4$.
    Let $\Delta \geq 2\Delta_0$ and $n \geq \Delta $.
    Let now $D_0 = (A \cup B, E)$ and $R$ be respectively the bipartite digraph and tournament from Lemma \ref{lemma:bipartite_lower_bound}, applied with the parameters $n$ and $\Delta/2$.

    If $h=2$, then we take $D = D_0$ and since $R$ doesn't contain a copy of $D_0$ we get $\vv{r}(D) \geq (c')^{\Delta/2} n \geq c^\Delta n$.
    Otherwise, we define a graded digraph $D$ on $nh$ vertices and with a graded partition $V_1 \cup \dots \cup V_h$, where $\ab{V_i}=n$ for all $i$, by declaring that
    for all $i \in [h-1]$, the induced subgraph $D[V_i \cup V_{i+1}]$ is a copy of $D_0$ such that $V_i$ plays the role of $A$ and $V_{i+1}$ plays the role of $B$. Note that the maximum degree in $D$ is at most $\Delta$, since the maximum in-degree and maximum out-degree are both at most $\Delta/2$.

    Now let $H = \lceil\frac{h}{2}\rceil -1  > 1$ and define a tournament $T$ on $(c')^{\Delta/2} Hn$ vertices with a vertex partition $V(T) = A_1 \cup \dots \cup A_H$, where
    $|A_i| = (c')^{\Delta/2} n$ for each $i \in [H]$, as follows. For each $i$, we let
    $T[A_i]$ be a copy of $R$, and for all $1 \leq i<j\leq H$, we direct all edges from $A_i$ to $A_j$. 
    We claim that there is no copy of $D$ in $T$.
    
    Indeed, suppose for contradiction that $\phi$ is an embedding of $D$ into $T$.
    For $i \in [h]$ and $A' \subseteq V(T)$, let $f_i(A') = \frac{|\phi(V_i) \cap A'|}{|V_i|}$ be the fraction of vertices of $V_i$ embedded into $A'$. Additionally, let $U_i = \bigcup_{j=i}^H A_j$, and let $j_i$ be the largest index $1 \leq j \leq H$ such that $f_i(U_j) \geq 0.99$. Note that this is well-defined since $U_1=V(T)$, and hence $f_i(U_1)=1$. 

    The key observation is that if $f_i(U_j)\geq 0.01$, then $f_{i+1}(U_j) \geq 0.99$. Indeed, if this is not the case, then there exist $X_i \subseteq V_i, X_{i+1} \subseteq V_{i+1}$ with $\ab{X_i},\ab{X_{i+1}}\geq 0.01 n$, with the properties that $\phi(X_i) \subseteq U_j$ and $\phi(X_{i+1}) \subseteq V(T) \setminus U_j$. But all edges in $T$ are directed from $V(T) \setminus U_j$ to $U_j$, hence the second condition in \cref{lemma:bipartite_lower_bound} implies that if such $X_i,X_{i+1}$ exist, then $\phi$ is not a valid embedding.

    In particular, applying this observation with $j=j_i$, we conclude that $f_{i+1}(U_{j_i}) \geq 0.99$. This implies that $j_{i+1}\geq j_i$ for all $i \in [h-1]$, that is, that the indices $j_i$ are monotonically non-decreasing. 
    We now claim that for each $i \in [h-2]$, we have that $j_{i+2}>j_i$.

    Indeed, if $j_{i+1}>j_i$, then we are done by the monotonicity property $j_{i+2}\geq j_{i+1}$. Hence we may assume that $j_i=j_{i+1}$, which in particular implies that $f_i(U_{j_i+1}) < 0.01$ by the key observation. If $f_{i+1}(U_{j_i+1})\geq 0.01$, then we are again done by the key observation. Therefore, we may assume that $f_{i+1}(U_{j_i+1})<0.01$. Together with the fact that $j_{i+1}=j_i$, we conclude that $f_i(A_{j_i}), f_{i+1}(A_{j_i})\geq 0.98$. 

    In other words, there exist $X_i \subseteq V_i, X_{i+1} \subseteq V_{i+1}$ with $\ab{X_i},\ab{X_{i+1}} \geq 0.98n$ such that $\phi(X_i), \phi(X_{i+1}) \subseteq A_{j_i}$. But this is a contradiction to \cref{lemma:bipartite_lower_bound}, since $T[A_{j_i}]$ is a copy of $R'$. We conclude that, as claimed, $j_{i+2}> j_i$ for all $i \in [h-2]$.

    Since $j_1 \geq 1$ and $j_{i+2} \geq j_i + 1$ for all $i$, we find that $j_i \geq i/2$ for all $i$. In particular, $j_h \geq h/2 > H$. But this is a contradiction as there are only $H$ parts in $T$, implying that there is no copy of $D$ in $T$.
    Therefore, 
    \[
        \vv{r}(D) >  (c')^{\Delta/2} Hn \geq \frac 14 (c')^{\Delta/2} hn \geq c^\Delta hn.\qedhere
    \]
\end{proof}

\section{Concluding remarks}\label{sec:conclusion}
While \cref{theorem:easy_upper_bound} is roughly best possible in general, it is reasonable to expect that one could improve it in certain cases. In particular, for the oriented hypercube $\vv{Q_d}$, we expect that the bound in \cref{thm:hypercube} could be significantly improved.
In fact, our techniques are already sufficient show that the induced subgraph of $\vv{Q_d}$ obtained by taking the vertices with at most $d/2$ non-zero coordinates has oriented Ramsey number at most $2^{3d+o(d)}$. As this digraph consists of simply the first half of the graded partition of $\vv{Q_d}$, this suggests to us that the bound in \cref{thm:hypercube} is not particularly close to best possible.
Concretely, we make the following conjecture, which is a directed analogue of the
Burr--Erd\H{o}s conjecture \cite{burr_magnitude_1975} that $r(Q_d) = O(2^d)$.
\begin{conjecture}
    There is an absolute constant $C>0$ such that $\vv r(\vv{Q_d}) \leq C2^d$ for all $d \geq 1$.
\end{conjecture}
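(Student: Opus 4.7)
The plan is to strengthen the framework of \cref{theorem:upper_bound} to exploit the symmetric structure of $\vv{Q_d}$. The authors already note that the bound for the lower half of the hypercube (vertices with at most $d/2$ ones) is $2^{3d+o(d)}$; by the natural symmetry $v \mapsto \mathbf{1}-v$ sending the bottom half to the top half, the same argument applied in reverse orientation handles the top half with the same cost. My first step would be to combine these two one-sided embeddings into a simultaneous embedding of the full hypercube. Concretely, I would anchor the middle level $V_{\lfloor d/2 \rfloor}$ inside the center of a median order of the target tournament, then apply \cref{lemma:dependent_random_choice} outward in both directions, building anchor sets $A_0,\ldots,A_{\lfloor d/2\rfloor}$ going forward (using small in-degrees and common out-neighborhoods, as in \cref{theorem:upper_bound}) and sets $A_{\lceil d/2 \rceil},\ldots,A_d$ going backward (using small out-degrees and common in-neighborhoods via a symmetric version of the lemma). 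The embedding of the two halves then has to be reconciled at the middle, which I would handle either by embedding $V_{\lfloor d/2\rfloor}$ and $V_{\lceil d/2\rceil}$ jointly with a two-sided variant of \cref{lemma:embedding} or by leaving a short overlap buffer where both embeddings coexist. This should already yield a bound of the form $C \cdot 2^{(3+o(1))d}$, an improvement on \cref{thm:hypercube} but still far from $C\cdot 2^d$.

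To close the remaining gap one must replace the current scheme, whose per-vertex overhead at level $i$ is roughly $2^{\Theta(\min(i,d-i))}$, by an argument whose overhead is independent of the level. A natural candidate is a directed adaptation of the bipartite greedy embedding of Conlon, Fox, and Sudakov \cite{conlon2012two}, which established $r(H) \leq C^\Delta n$ for undirected bipartite $H$ with maximum degree $\Delta$. In the directed setting one would embed each level of $\vv{Q_d}$ into the tournament while simultaneously maintaining a rich reservoir of candidate images for the next level; since all vertices at a given level of $\vv{Q_d}$ have identical in- and out-degree profiles, there is hope that the bookkeeping becomes uniform in a way that avoids the per-level exponential blow-up that plagues the median-order approach.

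The main obstacle, and the reason this is still only a conjecture, is that the combination of median order and dependent random choice is intrinsically limited by a $2^{\Omega(\Delta^-)}$ factor per embedded level, and $\vv{Q_d}$ has $d+1$ levels with maximum in-degree up to $d$. Any proof seems to demand either a genuinely new extremal tool for directed embeddings, or a directed analogue of Lee's proof \cite{lee2017ramsey} of the Burr--Erd\H os conjecture that gives $\vv r(D) \leq C_d^{} n$ for graded digraphs of bounded degeneracy; since $\vv{Q_d}$ is $d$-degenerate, such a statement would resolve the conjecture. I would attempt to approach the latter by studying whether the multiscale/random-greedy strategies that underpin Lee's proof survive the orientation constraint in a median order, perhaps by embedding cascades of sub-hypercubes recursively rather than processing levels one at a time, but I expect that substantive new ideas are required to match even the random lower bound $\vv r(\vv{Q_d}) = \Omega(2^{d/2})$.
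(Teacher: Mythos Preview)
The statement you are attempting to prove is a \emph{conjecture}, not a theorem: the paper states it explicitly as an open problem in \cref{sec:conclusion} and offers no proof. There is therefore nothing in the paper to compare your proposal against. Your write-up is not a proof either, and you seem to recognize this yourself: you describe a programme of incremental improvements (a two-sided median-order argument giving $2^{(3+o(1))d}$, then a hoped-for adaptation of Conlon--Fox--Sudakov or Lee), and you correctly identify the fundamental obstacle, namely that the dependent-random-choice step incurs a $2^{\Omega(\Delta^-)}$ cost per level, which accumulates to something super-linear in $2^d$ across the $d+1$ levels of the hypercube.

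As a research outline this is sensible and well-informed, but it should not be labelled a proof proposal. The two-sided symmetry idea is natural and would plausibly recover the $2^{3d+o(d)}$ bound the authors mention for the half-hypercube, though the reconciliation at the middle levels is nontrivial since the two halves must agree on the embedding of the entire middle layer, not just be individually embeddable. The later suggestions (a directed Conlon--Fox--Sudakov, a directed Lee) are speculative; neither is known to exist, and you give no concrete mechanism for either. In short: there is no gap to point to because there is no argument claiming to reach $C\cdot 2^d$; what you have written is an honest assessment that the conjecture is open and that current techniques fall short.
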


We also recall that Fox, He, and Wigderson \cite{fox2021ramsey} proved that any bounded-height, bounded-degree acyclic digraph has linear oriented Ramsey number, and \cref{theorem:easy_upper_bound} implies that the same is true for graded digraphs even of unbounded height. It would be very interesting to extend this result, and identify other classes of unbounded-height digraphs where an analogue of the Burr--Erd\H os conjecture holds.

Finally, we reiterate the main question left open from \cite{fox2021ramsey}, which we consider to be of central importance in the study of oriented Ramsey numbers.
\begin{question}
    Given $\Delta \geq 1$, does there exist some $C>0$ such that every $n$-vertex acyclic digraph $D$ with maximum degree $\Delta$ satisfies $\vv r(D) \leq n^C$?
\end{question}

\paragraph{Acknowledgments:} We are grateful to Domagoj Brada\v c and Benny Sudakov for helpful discussions, and for suggesting a simplification to the proof of \cref{lemma:dependent_random_choice}. We would also like to thank Xioayu He for constructive comments on an earlier draft of this paper.

\end{document}